\def\dim{\rm dim}
\def\C{{\mathbb C}}
\def\ker{{\rm { Ker}}}
\def\Irr{{\rm { Irr}}}
\def\ooo{{\rm O}^\vartheta{}}
\def\s{\sigma}
\def\bull{\vrule height 1.2ex width 1ex depth -.1ex }
\renewcommand{\subsection}{\@startsection{subsection}{2}{0mm}{7mm}{4mm}
{\bf\normalsize}}
\def\nmrt{\begin{enumerate}}
\def\enmrt{\end{enumerate}}
\def\tm#1{\item[{\rm (#1)}]}
\newtheorem{formula}{}[section]
\newtheorem{definition}[formula]{Definition}
\newtheorem{corollary}[formula]{Corollary}
\newtheorem{remark}[formula]{Remark}
\newtheorem{lemma}[formula]{Lemma}
\newtheorem{theorem}[formula]{Theorem}
\newtheorem{example}[formula]{Example}
\begin{document}

\title{Association schemes with at most two nonlinear irreducible characters and applications to finite groups
}
\author{
Javad Bagherian  \\
Department of Mathematics, University of Isfahan,\\ Isfahan
81746-73441, Iran.\\bagherian@sci.ui.ac.ir}

\maketitle
\begin{abstract}
An irreducible character $\chi$ of an association scheme is called nonlinear
if the multiplicity of $\chi$ is greater than $1$.
The main result of this paper gives a characterization of commutative association schemes with
at most two nonlinear irreducible characters. This yields a characterization of finite groups
with at most two nonlinear irreducible characters. A class of noncommutative
association schemes with at most two nonlinear irreducible character
is also given.
\end{abstract}
\smallskip

\noindent {\it Key words :} Association scheme, Character, Finite group, Group-like scheme, Multiplicity, Nonlinear.

\noindent {\it AMS Classification:}
05E30, 20C15.
\section{Introduction}
In the character theory of association schemes, the character values of irreducible characters
 give many useful information about association schemes. In particular,
the multiplicities of irreducible characters paly an important role for  determining the structure of
association schemes. For instance, see \cite{hir}. 

An irreducible character $\chi$ of an association scheme $(X,S)$ is called nonlinear
if the multiplicity of $\chi$ is greater than $1$.
An interesting problem in the character theory of association schemes is
what can be said about $(X,S)$ when the number of nonlinear irreducible characters is known.
In particular, if $(X,S)$ is a group association scheme induced from a finite group $G$, then since
the number of nonlinear irreducible characters of $G$ and $(X,S)$ is equal,  any characterization of association schemes with
a given number of nonlinear irreducible characters yields a characterization of finite groups in terms of the number of
nonlinear irreducible characters. It should be mentioned that there are some characterizations of
finite groups with one nonlinear irreducible character; for example see \cite{one}.

In this paper we characterize the structure of commutative association schemes with at most two
nonlinear irreducible characters.
This yields  
a characterization of finite groups with at most two nonlinear
irreducible characters. Moreover, we give a characterization of noncommutative association schemes
with one nonlinear irreducible character. Finally, a class of noncommutative association schemes with two
nonlinear irreducible characters is given.

\section{Preliminaries}
Let us first state some necessary definitions and notation.
For details, we refer the reader to  \cite{Zig} for the background of association
schemes. Throughout this paper,  $\C$ denotes the complex numbers.

\subsection{Association schemes}

\begin{definition}
Let $X$ be a finite set and $S=\{s_0, s_1, \ldots, s_n\}$ be a partition of $X \times X$. Then $(X,S)$ is
called an association scheme with $n$ classes if the following properties hold:
\nmrt \tm{i}
$s_0=\{(x,x) | x\in X\}$.
\tm{ii} For every $s\in S$, $s^*$ is also in $S$, where $s^*:=\{(x,y) |(y,x) \in s\}$.
\tm{iii}
For every $g,h,k \in S$, there exists a nonnegative integer $\lambda_{ghk}$ such that for every $(x,y)\in k$,
there exist exactly $\lambda_{ghk}$ elements $z\in X$ with $(x,z)\in g$ and $(z,y)\in h$.
\enmrt
\end{definition}
The diagonal relation $s_0$ will be denoted by $1$.
For each $s\in S$, we call $n_s=\lambda_{ss^* 1}$ the {\it valency} of $s$. For any nonempty subset $H$ of $S$, put
$n_H =\sum_{h\in H}n_h$. We call $n_S$ the {\it order} of $(X,S)$.
An association scheme $(X,S)$ is called {\it commutative} if for all $g, h, k\in S $, $\lambda_{ghk}=\lambda_{hgk}$.
Let $H$ and $K$ be nonempty subsets of $S$. We define $HK$ to be the set of all elements $t\in S$ such that there
exist element $h\in H$ and $k\in K$ with $\lambda_{hkt}\neq 0$. The set $HK$ is called the {\it complex product} of
$H$ and $K$. If one of factors in a complex product consists of a single element $s$, then one usually writes $s$
for $\{s\}$.
A nonempty subset $H$ of $S$ is called a {\it closed subset} if $HH\subseteq H$.
A closed subset $H$ of $S$ is called {\it strongly normal}, denoted by $H\vartriangleleft^\sharp S$,
if $sHs^*  = H$ for any $s\in S$.
We put $\ooo(S)=\bigcap_{H\vartriangleleft^\sharp S}H$ and call it the {\it thin residue} of $H$.
For a closed subset $H$ of $S$, the number $n_S/n_H$ is called the {\it index } of $H$ in $S$ and is denoted by $|S:H|$.

Let $(X,S)$ be a scheme and $H$ be a closed subset of $S$. For every $x\in X$ put $xH=\bigcup_{h\in H} xh$, where
$xh=\{y\in X|(x,y)\in h\}$. A {\it subscheme} $(X,S)_{xH}$ is a scheme with the points $xH$ and the set of relations
$\{s_{xH}|s\in H\}$, where $s_{xH}=s\bigcap (xH\times xH)$.

Moreover, if we put $X/\!\!/H=\{xH|x\in X\}$ and $S/\!\!/H=\{s^H|s\in S\}$ where  $s^H=\{(xH,yH)|y\in xHsH\}$,
then $(X/\!\!/H,S/\!\!/H)$ is a scheme, called the {\it quotient scheme} of $(X, S)$ over $H$.
Note that a
closed subset $H$ is strongly normal if and only if the quotient scheme $(X/\!\!/H,S/\!\!/H)$
is a group with respect to the relational product if and only if $ss^*\subseteq H$, for every $s\in S$ (see \cite[Theorem 2.2.3]{Zig}).
In particular, since $S/\!\!/\ooo(S)$ is a finite group, we can consider the derived
subgroup $(S/\!\!/\ooo(S))'$ of $S/\!\!/\ooo(S)$. Suppose  $S'$ be the inverse image of $(S/\!\!/\ooo(S))'$.
Then the quotient scheme $(X/\!\!/S',S/\!\!/S')$ is an abelian group with respect to the relational product.

\subsection{Characters of association schemes}

Let $(X,S)$ be an association scheme. For every $s\in S$, let $\s_s$ be the adjacency matrix of $s$.
For any nonempty subset $H$ of $S$, we put $\s_H:=\sum_{ h\in H}\s_h$.
For convenience, $\s_{1}$ is denoted by $1$.
It is known that $\mathbb{C}S=\bigoplus_{s\in S}\mathbb{C}\s_s$,
the {\it adjacency algebra} of $(X,S)$, is a semisimple algebra (see \cite[Theorem 4.1.3]{Zig}).
The set of irreducible characters of $S$  is denoted by $\Irr(S)$.
 One can see that $1_S\in \rm{Hom_\C(\C S,\C)}$
such that $1_S(\s_s)=n_s$ is an irreducible character of $\C S$, called the
{\it principal} character. In \cite{hanakirep}, Hanaki has shown that the
irreducible characters of $S/\!\!/\ooo(S)$ can be considered as irreducible characters of $S$.

Let $\Gamma_S$ be a representation of $\C S$ which sends $\s_s$ to itself for every $s\in S$. Let $\gamma_S$ be the
character afforded by $\Gamma_S$. Then one can see that  $\gamma_S(1)=|X|$ and $\gamma_S(\s_s)=0$ for
every $1 \neq s\in S$. Consider the following irreducible decomposition of $\gamma_S$,
\begin{eqnarray}\label{3}\nonumber
\gamma_S=\displaystyle\sum_{\chi \in \Irr(S)} m_\chi \chi.
\end{eqnarray}
We call $m_\chi$ the {\it multiplicity} of $\chi$. 
One can see that $m_{1_S}=1$ and $|X|=\sum_{\chi\in \Irr(S)}m_\chi\chi(1)$ (see \cite[section 4]{Zig}).
For every $\chi\in \Irr(S)$, put $$\ker(\chi)=\{s\in S\mid \chi(\s_s)=n_s\chi(1)\}.$$ Then $\ker(\chi)$ is a normal closed subset of $S$
and it follows from \cite[Theorem 5.3]{hanakirep} that for every normal closed subset $H$ of $S$, $$\Irr(S/\!\!/H)=\{\chi\in \Irr(S)\mid H\subseteq \ker(\chi)\}.$$

\begin{example}\label{exam}
Let $G$ be a finite group and $C_0=\{1\}, C_1, \ldots, C_h $ be the conjugacy classes of $G$. Define
$R_i$ by $(x,y)\in R_i$ if and only if $xy^{-1}\in C_i$, where $x,y\in G$. Put $S=\{R_i\}_{0\leq i \leq h}$. Then
$(G,S)$ is an association scheme, which is called the {\it group association scheme} of $G$.
For every relation $R_i$ of $S$, $n_{R_i}=|C_i|$ and one can see that $R_i\in \ooo(S)$ if and only if $C_i\subseteq G'$, where $G'$ is the derived subgroup of $G$.

The adjacency algebra of $(G,S)$ is isomorphic to the algebra ${\rm Z}(\C G)$ with the basis ${\rm Cla}(G)$,
where ${\rm Z}(\C G)$ is the center of group algebra $\C G$ and ${\rm Cla}(G)=\{K_0, \ldots, K_h\}$,
with $K_i=\sum_{g\in C_i}g$. Furthermore, $\{\omega_{\chi}|~\chi\in \Irr(G)\}$ is
the set of irreducible characters of $Z(\C G)$, where
\begin{eqnarray}\nonumber\label{e}
\omega_{\chi}(K_i)=\frac{\chi(g)|C_i|}{\chi(1)},
\end{eqnarray}
for some $g\in C_i$. Since $|G|=\sum_{\chi\in \Irr(G)}\chi(1)^2$, it follows that
$m_{\omega_\chi}=\chi(1)^2$, for every $\chi\in \Irr(G)$.
\end{example}

Let $(X,S)$ be an association scheme  and $T$ be a closed subset of $S$. Suppose that $L$ is a $\C T$-module which affords
the character $\varphi$, and $V$ is a $\C S$-module which affords the character $\chi$.
Then $V$ is a $\C T$-module which affords the restriction $\chi_T$ of $\chi$ to $\C T$, and
$L^S=L \otimes_{\C T} \C S$ is a $\C S$-module which affords the induction $\varphi^S$ of $\varphi$.
Suppose that $T$ is  strongly normal. Put $G=S/\!\!/T$. Let $\varphi$ be an irreducible character
of $T$ and $L$ be an irreducible  $\C T$ module affording $\varphi$. Consider the induction of $L$ to $S$.
Then
\begin{eqnarray}\nonumber
L^S=L\otimes_{\C T} \C S=\bigoplus_{s^T\in S/\!\!/T}L\otimes\C (TsT).
\end{eqnarray}
The {\it stabilizer} $G\{L\}$ of $L$ in $G$ is defined by
$$
G\{L\}=\{s^T\in S/\!\!/T|L\otimes\C (TsT) \cong L\}.
$$
One can see that $G\{L\}$ is a subgroup of $G$. 
The set of $S/\!\!/T$-conjugates of $L$ is $\{L\otimes\C (TsT)| s\in S,~ L\otimes\C (TsT)\neq 0\}$. From \cite{clifford2} it
follows that if $L$ and $L'$ are $S/\!\!/T$-conjugates, then $L^S\cong {L'}^S$.

\begin{theorem}(See \cite{clifford2}.){\label{cli}}
Let $(X,S)$ be a scheme and $T$ be a strongly closed subset of $S$.
Put $G=S/\!\!/T$. Then for every $\chi\in \Irr(S)$, there exists a positive integer $e$ such that
$$\chi_{T}=e\displaystyle\sum^n_{i=1}\varphi_i,$$
where $\varphi_i, 1\leq i\leq n$, are $S/\!\!/{T}$-conjugate irreducible characters of $T$.
\end{theorem}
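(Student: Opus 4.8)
The plan is to run the classical Clifford machinery, the one new point being that $\C S$ carries a $G$-grading over $G=S/\!\!/T$ which is ``invertible enough'' thanks to the $*$-structure on $\C S$. Since $T\vartriangleleft^\sharp S$ (equivalently $ss^*\subseteq T$ for all $s\in S$), the quotient $G=S/\!\!/T$ is a finite group, and putting $(\C S)_g=\bigoplus_{s^T=g}\C\s_s$ one gets from $u\in st\Rightarrow u^T=s^Tt^T$ that $\C S=\bigoplus_{g\in G}(\C S)_g$ is $G$-graded with $(\C S)_g(\C S)_h\subseteq(\C S)_{gh}$, $(\C S)_1=\C T$; in the notation of the excerpt $(\C S)_g=\C(TsT)$ for any $s$ with $s^T=g$, and each $(\C S)_g$ is a $(\C T,\C T)$-bimodule. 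Fix $\chi\in\Irr(S)$ and an irreducible $\C S$-module $V$ affording it, realized as a submodule of the standard module $\C^X$, on which the standard Hermitian form satisfies $\langle\s_su,v\rangle=\langle u,\s_{s^*}v\rangle$. As $\C T$ is semisimple, $V_T=V|_{\C T}$ is the direct sum of its $\varphi$-homogeneous components, $\varphi\in\Irr(T)$.

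The crux is a ``$G$ permutes the homogeneous components'' lemma: if $W\subseteq V$ is an irreducible $\C T$-submodule, then for each $g\in G$ the $\C T$-module $(\C S)_gW$ is either $0$ or homogeneous of a single isomorphism type, which is an $S/\!\!/T$-conjugate of that of $W$ (it is $W\otimes\C(TsT)$ with $s^T=g$, in the sense of the excerpt), and $(\C S)_{g^{-1}}(\C S)_gW=W$ whenever $(\C S)_gW\neq0$. The first step is that $(\C S)_{g^{-1}}(\C S)_g\subseteq\C T$, so $(\C S)_{g^{-1}}(\C S)_gW$ is a $\C T$-submodule of $\C TW=W$; if $(\C S)_gW\neq0$, choose $s$ with $s^T=g$ and $\s_sw\neq0$ for some $w\in W$, and use $\s_s\s_{s^*}\in\C T$ (valid since $ss^*\subseteq T$) together with $\langle\s_{s^*}\s_sw,w\rangle=\|\s_sw\|^2>0$ to get $\s_{s^*}\s_sw\neq0$, whence $(\C S)_{g^{-1}}(\C S)_gW=W$ by irreducibility. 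This says the two-sided ideal $(\C S)_{g^{-1}}(\C S)_g$ of $\C T$ meets the Wedderburn block of $W$; a symmetric application of the same positivity estimate then pins $(\C S)_gW$ down to a single block and lets $(\C S)_g$, $(\C S)_{g^{-1}}$ be used as mutually inverse transport maps between the corresponding blocks.

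Granting the lemma, the conclusion follows quickly. Since $V$ is irreducible, $V=\C S\,W=\sum_{g\in G}(\C S)_gW$, so $V_T=\sum_{g}(\C S)_gW$ is a sum of irreducible $\C T$-submodules, each of type $W\otimes\C(TsT)$ for some $g=s^T$; as any two such types are again $S/\!\!/T$-conjugate (via $g^{-1}g'$), the distinct constituents $\varphi_1,\dots,\varphi_n$ of $\chi_T$ form a single $S/\!\!/T$-conjugacy class. For the uniform multiplicity, pick $g_i$ with $W\otimes\C(Ts_iT)$ ($s_i^T=g_i$) affording $\varphi_i$; by the lemma, $(\C S)_{g_i}$ and $(\C S)_{g_i^{-1}}$ carry the $\varphi_1$-homogeneous component of $V_T$ and the $\varphi_i$-homogeneous component into each other, take irreducible submodules to irreducible submodules, and have composites recovering the whole components, so these components contain the same number of irreducible summands; calling that common number $e$ gives $\chi_T=e\sum_{i=1}^n\varphi_i$.

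The main obstacle is exactly the lemma. In the group-algebra case the graded pieces are spanned by units (group elements), so $(\C S)_g\otimes_{\C T}-$ is an equivalence of module categories (Dade's theorem for strongly graded rings) and the permutation statement is automatic; here the $\s_s$ are genuine $0/1$-matrices, typically very far from invertible, so $\C S$ need not be strongly $G$-graded in the naive sense. What rescues the argument is the interplay of the $*$-structure ($\s_{s^*}=\s_s^{*}$, so $\s_s^{*}\s_s$ is positive on $V$), the relation $\s_s\s_{s^*}=n_s\,1+\sum_{1\neq t\in ss^*}\lambda_{ss^*t}\s_t\in\C T$, and the semisimplicity of $\C T$; organizing these to identify the type of $(\C S)_gW$ and to extract the single integer $e$ is the only genuine work.
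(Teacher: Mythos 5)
The paper gives no proof of this theorem: it is quoted verbatim from Hanaki's \emph{Clifford theory for association schemes} \cite{clifford2}, so there is no in-paper argument to compare against. Measured against the cited source, your strategy is essentially the standard one: grade $\C S$ by $G=S/\!\!/T$ via $(\C S)_g=\bigoplus_{s^T=g}\C\s_s=\C(TsT)$, and use the $*$-structure together with positivity on the standard module to let the graded pieces play the role that invertible group elements play in classical Clifford theory. The one step you carry out in detail is correct: realizing $V\subseteq\C^X$, the identity $\langle\s_{s^*}\s_sw,w\rangle=\|\s_sw\|^2$ shows $\s_{s^*}\s_sw\neq0$ whenever $\s_sw\neq0$, and since $\s_{s^*}\s_s\in\C T$ (as $s^*s\subseteq T$), irreducibility of $W$ gives $(\C S)_{g^{-1}}(\C S)_gW=W$ whenever $(\C S)_gW\neq0$.

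What you have not actually proved — and you say so yourself — is the key lemma, and ``a symmetric application of the same positivity estimate'' is not quite enough as stated; but the gap is fillable with the tools you have assembled. For homogeneity: if $M$ is a nonzero isotypic piece of $(\C S)_gW$, the same positivity argument (if $\s_{s^*}u=0$ for all $s$ with $s^T=g$ then $u\perp(\C S)_gV\ni u$, so $u=0$) gives $(\C S)_{g^{-1}}M\neq0$, hence $(\C S)_{g^{-1}}M=W$, hence $(\C S)_gW=(\C S)_g(\C S)_{g^{-1}}M\subseteq\C T M=M$; so $(\C S)_gW$ cannot meet two distinct blocks. You also need that the type of $(\C S)_gU$ and its nonvanishing depend only on the isomorphism class of $U$, not on its embedding in $V$ (e.g.\ via the annihilator of $\s_{s^*}\s_s$ and a diagonal-submodule argument applied to two isomorphic copies); without this, ``$(\C S)_{g_i}$ carries the $\varphi_1$-component into the $\varphi_i$-component'' is not yet justified. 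Finally, for the single integer $e$, counting ``irreducible summands'' is too loose since $(\C S)_g$ need not send irreducibles to irreducibles; the clean statement is that $M\mapsto(\C S)_gM$ and $N\mapsto(\C S)_{g^{-1}}N$ are mutually inverse, inclusion-preserving bijections between the submodule lattices of the two isotypic components, which forces them to have the same length. With these three points supplied, your argument is complete and agrees in substance with Hanaki's; the conjugate $\varphi_i$ is then genuinely afforded by $W\otimes_{\C T}\C(Ts_iT)$ as in the paper's definition, since the multiplication map onto $(\C S)_{g_i}W$ identifies the types.
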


\begin{theorem}(See \cite{hanaki2}.)\label{pindex}
Let $(X,S)$ be an association scheme and $T$ be a strongly normal closed subset of $S$.
Suppose that $G=S/\!\!/T$ is the cyclic group of prime order $p$.
Suppose that $\Irr(G)=\{\zeta_i|1\leq i\leq p \}$.
Then for $\chi\in \Irr(S)$, one of the following statements holds:
\nmrt
\tm{1}
$\chi_T\in \Irr(T)$ and $(\chi_T)^S=\sum_{i=1}^p \chi\zeta_i$,
\tm{2}
$\chi(\s_s)=0$, for any $s\in S\setminus T$ and $\chi_T$ is a sum of at most $p$ distinct irreducible characters. If $\psi$ is an
irreducible constituent of $\chi_T$, then $\psi^S=\chi$.
\enmrt
\end{theorem}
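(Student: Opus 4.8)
The plan is to transplant the classical Clifford theory of a normal subgroup of prime index into the scheme setting; the only genuinely new ingredient is a twisting action of $\Irr(G)$ on $\Irr(S)$, after which everything is degree- and inner-product bookkeeping. Write $p=|G|$ and let $\zeta_1,\dots,\zeta_p$ be the lifts to $S$ of the linear characters $\hat\zeta_1,\dots,\hat\zeta_p$ of $G=S/\!\!/T$. The first step is the twist: since $G$ is a group, $k\in gh$ forces $k^T=g^Th^T$ in $G$, so for each $\hat\zeta\in\Irr(G)$ the $\C$-linear map $\sigma_s\mapsto\hat\zeta(s^T)\sigma_s$ is an algebra automorphism $\tau_{\hat\zeta}$ of $\C S$. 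Twisting a module by $\tau_{\hat\zeta}$ sends $\chi\in\Irr(S)$ to an irreducible character $\chi\zeta$ with $(\chi\zeta)(\sigma_s)=\hat\zeta(s^T)\chi(\sigma_s)$; in particular $\hat\zeta(s^T)=1$ for $s\in T$, so $(\chi\zeta)_T=\chi_T$. Thus $\Irr(G)$ acts on $\Irr(S)$ by $\chi\mapsto\chi\zeta_i$, and since $p$ is prime each orbit has size $1$ or $p$.

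Next I would assemble the standard facts about induced characters of schemes (see \cite{clifford2}): Frobenius reciprocity $\langle\varphi^S,\chi\rangle_S=\langle\varphi,\chi_T\rangle_T$; the degree identity $\varphi^S(\sigma_1)=|S:T|\,\varphi(\sigma_1)=p\,\varphi(\sigma_1)$; and the fact that $S/\!\!/T$-conjugate irreducible characters of $T$ have equal degree and equal induction to $S$. Combined with Theorem~\ref{cli}, write $\chi_T=e(\varphi_1+\dots+\varphi_n)$ with $\{\varphi_1,\dots,\varphi_n\}$ a single $S/\!\!/T$-orbit; then $n\mid p$, and since $\chi_T(\sigma_1)=\chi(\sigma_1)$ and the $\varphi_j$ have a common degree, $\chi(\sigma_1)=en\,\varphi_1(\sigma_1)$.

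Now I split on the $\Irr(G)$-orbit of $\chi$. If it has size $p$, then $\chi\zeta_1,\dots,\chi\zeta_p$ are distinct irreducibles; for each $i$, Frobenius reciprocity gives $\langle\varphi_1^S,\chi\zeta_i\rangle=\langle\varphi_1,\chi_T\rangle=e$, so $p\,\varphi_1(\sigma_1)=\varphi_1^S(\sigma_1)\ge e\sum_{i=1}^p(\chi\zeta_i)(\sigma_1)=ep\,\chi(\sigma_1)=e^2np\,\varphi_1(\sigma_1)$, forcing $e=n=1$. Hence $\chi_T=\varphi_1\in\Irr(T)$, and a degree comparison in $\varphi_1^S\ge\sum_{i=1}^p\chi\zeta_i$ (both of degree $p\,\chi(\sigma_1)$) yields $(\chi_T)^S=\sum_{i=1}^p\chi\zeta_i$: this is (1). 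If instead the orbit has size $1$, so $\chi\zeta_i=\chi$ for all $i$, then for $s\in S\setminus T$ we have $s^T\ne1$ in the prime-order group $G$, so some $\hat\zeta_i(s^T)\ne1$; from $\chi(\sigma_s)=(\chi\zeta_i)(\sigma_s)=\hat\zeta_i(s^T)\chi(\sigma_s)$ we get $\chi(\sigma_s)=0$, the first half of (2). Since $\chi$ is then supported on $T$, the scheme inner product restricts as $1=\langle\chi,\chi\rangle_S=\tfrac1p\langle\chi_T,\chi_T\rangle_T=\tfrac1p e^2n$, so $e^2n=p$; as $p$ is a non-square prime and $n\mid p$ this forces $n=p$ and $e=1$, so $\chi_T=\varphi_1+\dots+\varphi_p$ is a sum of at most $p$ distinct irreducibles, and for any constituent $\psi$, $\langle\psi^S,\chi\rangle=\langle\psi,\chi_T\rangle=1$ together with $\psi^S(\sigma_1)=p\,\psi(\sigma_1)=\chi(\sigma_1)$ gives $\psi^S=\chi$: this is (2). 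The two cases are exhaustive, so the dichotomy holds.

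The combinatorial skeleton above is short; the real work is justifying the scheme-theoretic inputs, and that is where I expect the main difficulty. One must check that the twist $\chi\mapsto\chi\zeta_i$ is well defined on $\Irr(S)$ with the stated character values, establish the degree formula $\varphi^S(\sigma_1)=|S:T|\,\varphi(\sigma_1)$ (note that $\C S$ need not be free over $\C T$, so this is not the naive ``rank times dimension''), and pin down the precise form of the inner product on $\Irr(S)$ and its restriction to $T$ for characters supported on $T$. Each of these should be extracted from the Clifford theory of schemes in \cite{clifford2} or proved directly before the counting can be run; granting them, the argument closes as above.
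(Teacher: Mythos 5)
First, a point of comparison: the paper does not prove this theorem at all --- it is imported verbatim from Hanaki \cite{hanaki2} --- so there is no internal argument to measure yours against. Judged on its own, your skeleton is the standard (and essentially Hanaki's) one: the twist $\chi\mapsto\chi\zeta$ is exactly the character product of \cite{hana3} already quoted in the preliminaries, the orbit of $\chi$ under $\Irr(G)\cong\mathbb{Z}/p$ has length $1$ or $p$, length $p$ yields statement (1) by Frobenius reciprocity plus a degree count (sound once you import from \cite{clifford2} that for strongly normal $T$ every conjugate module $L\otimes\C(TsT)$ is nonzero of the same dimension as $L$, which is what gives $\varphi^S(\s_1)=p\,\varphi(\s_1)$), and length $1$ gives the vanishing of $\chi$ off $T$.

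The genuine gap is the orthogonality step in case (2). The identity $1=\langle\chi,\chi\rangle_S=\tfrac1p\langle\chi_T,\chi_T\rangle_T$ is a fact about groups, not about adjacency algebras. The orthogonality relations of a scheme are valency-weighted and multiplicity-normalized: $\sum_{s\in S}n_s^{-1}\chi(\s_s)\chi(\s_{s^*})=|X|\,\chi(\s_1)/m_\chi$, so the natural self-pairing of $\chi$ is $\chi(\s_1)/m_\chi$, not $1$; and when you restrict the sum to $T$ (legitimate, since $\chi$ vanishes off $T$) the resulting right-hand side involves the multiplicities of the $\varphi_i$ in the subscheme $(X,S)_{xT}$, which are not controlled by $m_\chi$. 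Carrying the computation through yields a relation of the shape $e=p\,m_{\varphi_1}/m_\chi$ rather than $e^2n=p$. This is not cosmetic: the conclusion $e=1$, $n=p$ is precisely what you use to get $\chi(\s_1)=p\,\psi(\s_1)$ and hence $\psi^S=\chi$ by degree comparison, so everything in statement (2) beyond the vanishing hangs on it. (The bound $n\le p$ itself is cheap: by Theorem \ref{cli} the $\varphi_i$ form a single $S/\!\!/T$-orbit, whose length divides $|G|=p$. But $e=1$ and $\psi^S=\chi$ require the graded structure of $\C S=\bigoplus_{s^T\in G}\C(TsT)$ over $\C T$ that Hanaki actually exploits in \cite{hanaki2}, not a group-style inner-product count.) You flagged the inner product as something ``to pin down,'' but it is not a detail to be checked later --- the group formula is false in this setting and a different argument is needed to close case (2).
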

The product of characters in association schemes has been given in \cite{hana3} by Hanaki. If $(X,S)$ is an association scheme and $T$
is a strongly normal closed subset of $S$, then it follows from \cite[Theorem 3.3]{hana3}  that for every $\chi\in \Irr(S)$  and $\zeta\in \Irr(S/\!\!/T)$, the character product $\chi\zeta$ defined by
$$\chi\zeta(\s_s)=\chi(\s_s)\zeta(\s_{s^T})$$
is a character of $S$. If $\zeta(1)=1$, then $\chi\zeta \in \Irr(S)$. So  $\Irr(S/\!\!/S')$ acts on $\Irr(S)$ by the above multiplication.

\subsection{products of association schemes}
The wedge product of association schemes is a way to construct a new association scheme from old
ones and has been given by Muzychuk in \cite{we}. A special case of the wedge product of association schemes is the wreath product.
We refer the reader to \cite{we} for more details.
Here we give the definition of wedge product of association schemes. This is equivalent to Muzychuk definition of wedge product.

Let $(X,S)$ be an association scheme and $K\subseteq  H$ be closed subsets of $S$ such that
\nmrt
\tm{a} $\sigma_{K}\sigma_s=n_K \s_s=\s_s \s_K$, for every $s\in S\setminus H$;
\tm{b} $K\unlhd S$.
\enmrt
Then $S$ is called the wedge product of association schemes $(X,S)_{xH}$ and $(X/\!\!/K, S/\!\!/K)$ for some $x\in X$.
\\

In the above definition if $K=H$, then
$S$ is called the {\it wreath product} of association schemes $(X,S)_{xH}$ and $(X/\!\!/H, S/\!\!/H)$ for some $x\in X$.

The following result is immediately obtained from the definition of wreath product.

\begin{lemma}\label{pedrs}
Let $(X,S)$ be a commutative association scheme and $H$ be a closed subset of $S$. Then the following are equivalent:
\nmrt
\tm{1} $S$ is the wreath product of association schemes $(X,S)_{xH}$ and $(X/\!\!/H, S/\!\!/H)$,
\tm{2} $|S|=|H|+|S/\!\!/H|-1$,
\tm{3} For every $h\in H$ and $s\in S\setminus H$, $\s_s\s_h=\s_h\s_s=n_h\s_s$.
\enmrt
\end{lemma}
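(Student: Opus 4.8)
The plan is to prove the three equivalences in a cycle $(1)\Rightarrow(2)\Rightarrow(3)\Rightarrow(1)$, exploiting the structure-constant description of the wreath product together with commutativity of $S$.

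First I would establish $(1)\Rightarrow(2)$ by a direct count. If $S = (X,S)_{xH}\wr(X/\!\!/H,S/\!\!/H)$, then by definition the relations of the wreath product are $\{\overline{h}\mid h\in H\}$ together with $\{\overline{f}\mid f\in (S/\!\!/H)\setminus\{1\}\}$. These two families are disjoint and the map $h\mapsto\overline h$, $f\mapsto\overline f$ is injective on each, so $|S| = |H| + (|S/\!\!/H|-1)$, which is exactly (2).

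For $(2)\Rightarrow(3)$, the key observation is that in any scheme the relations of $S$ that lie above a fixed coset modulo $H$ are counted by the fibers of the natural map $S\to S/\!\!/H$, $s\mapsto s^H$; the fiber over $1^H$ is exactly $H$, and a general fiber has size at least one, with equality forced precisely when $n_S/n_H$ pieces collapse as much as possible. Writing out $|S| = \sum_{u\in S/\!\!/H}|\{s\in S: s^H=u\}|$ and using that the fiber over $1^H$ has $|H|$ elements, hypothesis (2) forces every fiber over $u\neq 1^H$ to be a singleton. Now fix $h\in H$ and $s\in S\setminus H$. Then $\sigma_h\sigma_s = \sum_{t} \lambda_{hst}\,\sigma_t$, and every $t$ appearing satisfies $t^H = h^H s^H = s^H$, so by the singleton property $t = s$ is the only possibility; hence $\sigma_h\sigma_s = \lambda_{hss}\sigma_s$. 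Comparing valencies (apply $1_S$, or equivalently count: the total valency of $\sigma_h\sigma_s$ is $n_h n_s$) gives $\lambda_{hss} = n_h$, so $\sigma_h\sigma_s = n_h\sigma_s$; commutativity of $S$ gives $\sigma_s\sigma_h = n_h\sigma_s$ as well. This is (3).

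Finally, for $(3)\Rightarrow(1)$, I would build the isomorphism of schemes $X\times(X/\!\!/H)$ — or more precisely verify that $S$ has exactly the shape of a wreath product — by analyzing the structure constants. Condition (3) says that multiplying an ``$H$-relation'' by a ``non-$H$-relation'' just scales it; I would first use this to show that $H$ is closed (clear, since $H$ is given closed) and that for $s,s'\in S\setminus H$ with $s^H = s'^H$ one must have $s = s'$, by examining $\sigma_s\sigma_{s'^*}$ and noting it lands in $\mathbb{C}H$ — condition (3) then pins down its support. This recovers the singleton-fiber property, hence $|S| = |H|+|S/\!\!/H|-1$, and then one checks that the assignment identifying $s\in H$ with $\overline{s}$ and $s\in S\setminus H$ with $\overline{s^H}$ respects all structure constants, using (3) to handle the mixed products and the quotient/subscheme axioms for the pure ones. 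This is essentially the content of \cite[Lemma 3.1]{xu}, so at this level of detail I would cite it, but the self-contained argument runs as above.

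The main obstacle I anticipate is the implication $(3)\Rightarrow(1)$: turning the purely algebraic relation $\sigma_h\sigma_s = n_h\sigma_s$ into an honest identification of $(X,S)$ with a wreath product requires checking that \emph{all} structure constants match, including those among relations in $S\setminus H$, which forces one to show these project isomorphically onto the structure constants of $S/\!\!/H$ — here commutativity is used in an essential way, and one must be careful that no ``extra'' relations survive. The other two implications are bookkeeping by comparison.
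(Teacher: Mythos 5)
The paper does not prove this lemma at all: it is imported verbatim as \cite[Lemma 3.1]{xu}, so there is no in-paper argument to compare against. Your reconstruction is essentially correct and self-contained where the paper is not. The cycle $(1)\Rightarrow(2)\Rightarrow(3)\Rightarrow(1)$ is a sensible organization; $(1)\Rightarrow(2)$ is the right count, and your $(2)\Rightarrow(3)$ argument is sound: the fiber of $s\mapsto s^H$ over $1^H$ is exactly $H$, hypothesis $(2)$ forces all other fibers to be singletons, the support of $\s_h\s_s$ lies in $HsH$ which is the fiber over $s^H$, and the valency count $n_hn_s=\lambda_{hss}n_s$ pins down the coefficient. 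The one soft spot is in $(3)\Rightarrow(1)$: you propose to recover the singleton-fiber property by ``examining $\s_s\s_{s'^*}$ and noting it lands in $\C H$,'' but for a merely closed (not strongly normal) $H$ the quotient $S/\!\!/H$ is only a scheme, and $\s_s\s_{s'^*}$ need not have support contained in $H$ even when $s^H=s'^H$; it is only guaranteed to meet $H$. A cleaner route from $(3)$ is to note $\s_H\s_s\s_H=n_H^2\s_s$ with $\s_H=\sum_{h\in H}\s_h$, whose support is the full complex product $HsH$, forcing $HsH=\{s\}$; then $s$ equals the full preimage of $s^H$ in $X\times X$, which is exactly the wreath-product shape of the non-$H$ relations, and the remaining structure-constant checks are the routine verification you correctly defer to \cite{xu}. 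With that repair the argument goes through.
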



The following easy lemma is useful.
\begin{lemma}\label{val}
Let $(X,S)$ be a commutative association scheme and $K\subseteq H$ be closed subsets of $S$. Let
$S$ be the wedge product of association schemes $(X,S)_{xH}$ and $(X/\!\!/K, S/\!\!/K)$.
If $K$ is strongly normal in $S$, then $n_s=n_K$ for every $s\in S\setminus H$.
\end{lemma}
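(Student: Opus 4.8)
The plan is to prove the sharper combinatorial statement that for every $s\in S\setminus H$ and every point $y$ with $(x,y)\in s$ one has $xs=yH$. Since $yH=\bigcup_{h\in H}yh$ is a disjoint union with $|yh|=n_h$, we have $|yH|=\sum_{h\in H}n_h=n_H$, so this identity gives $n_s=|xs|=|yH|=n_H$ immediately.

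The proof rests on two ingredients. First, feeding the wreath‑product hypothesis into Lemma \ref{pedrs} yields item (3): $\sigma_s\sigma_h=n_h\sigma_s$ for all $h\in H$ and all $s\in S\setminus H$. Reading this at the level of complex products, the coefficient of $\sigma_s$ in $\sigma_s\sigma_h$ is $n_h\neq 0$ and every other coefficient vanishes, so $sh=\{s\}$ for each $h\in H$, hence $sH=\{s\}$. In relational terms: whenever $(x,y)\in s$ and $(y,z)\in h$ with $h\in H$, then $(x,z)\in s$. Second, since $H$ is strongly normal, the characterization recalled in Section 2 gives $ss^*\subseteq H$ for \emph{every} $s\in S$; applying it to $s^*$ in place of $s$ (note $s^*\in S$) yields $s^*s\subseteq H$ as well.

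Now fix $s\in S\setminus H$. As $n_s\geq 1$ there is a point $y$ with $(x,y)\in s$. If $z\in yH$, say $(y,z)\in h$ for some $h\in H$, then the relational consequence of the first ingredient (using $(x,y)\in s$) gives $(x,z)\in s$, so $z\in xs$; thus $yH\subseteq xs$. Conversely, if $z\in xs$, then $(x,z)\in s$ and $(y,x)\in s^*$, so the unique relation $r$ with $(y,z)\in r$ satisfies $r\in s^*s\subseteq H$, whence $z\in yH$; thus $xs\subseteq yH$. Therefore $xs=yH$, and $n_s=n_H$ as desired.

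The argument is short and, consistent with the author's ``easy'' label, presents no real obstacle; the only points requiring a moment's care are the translation of the matrix identity $\sigma_s\sigma_h=n_h\sigma_s$ into the relational statement $sH=\{s\}$, and the observation that strong normality of $H$ must be invoked at $s^*$ (not just at $s$) in order to obtain $s^*s\subseteq H$, which is what makes the inclusion $xs\subseteq yH$ work.
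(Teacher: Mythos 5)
Your proof is correct, but it goes by a more elementary route than the paper's. Both arguments start from Lemma \ref{pedrs}(3) to extract $Hs=sH=\{s\}$ for $s\in S\setminus H$; they diverge in how they convert this into the valency statement. The paper simply quotes the general identity $n_{s^H}\,n_H=n_{Hs}$ from Zieschang, notes $n_{s^H}=1$ because $H$ is strongly normal (so the quotient is thin), and concludes $n_s=n_{Hs}=n_H$ in two lines. You instead prove the relevant instance of that identity from scratch: fixing $(x,y)\in s$, you show $xs=yH$ by a double inclusion, using $sh=\{s\}$ for one direction and $s^*s\subseteq H$ (strong normality applied at $s^*$) for the other, and then count $|yH|=n_H$ via the disjointness of the sets $yh$. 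Your version is self-contained and makes visible exactly where strong normality enters (only in the inclusion $xs\subseteq yH$), at the cost of being longer; the paper's version is shorter but leans on two external facts. Both are sound, and the combinatorial content is ultimately the same.
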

\begin{proof}
For every $s\in S \setminus H$ we have $$n_{s^K}n_K=n_{Ks},$$ (see \cite[Theorem 1.5.4(v)]{Zig}).
Since  $n_{s^K}=1$ it follows that $n_K=n_{Ks}$. On the other hand,   $Ks=\{s\}$. Hence $n_s=n_K$.
\end{proof}\hfill\bull

\subsection{Group-like schemes}
Let $(X, S)$ be an association scheme. We define a binary relation $\sim$ on $S$ as follows. For $s,t\in S$, we write
$s\sim t$ if
\begin{eqnarray}\label{eqn}
\chi(\s_s)/n_s=\chi(\s_t)/n_t,
\end{eqnarray}
 for every $\chi\in \Irr(S)$. Then $\sim$ is an equivalence relation.
For $s\in S$, put $\widetilde{s}=\bigcup_{t\sim s}s$ and $\widetilde{S}=\{\widetilde{s}|s\in S\}$.
If $Z(\C S)=\bigoplus_{\widetilde{s}\in \widetilde{S}}\C\s_{\widetilde{s}}$, then $(X,S)$ is called a {\it group-like scheme}.
If $(X, S)$ is group-like, then $(X, \widetilde{S})$ becomes a commutative association scheme.

\begin{theorem}(See \cite[Theorem 4.1]{hanaki}.)\label{glike2}
For an association scheme $(X, S)$, the following statements are equivalent:
\nmrt
\tm{1} $(X,S)$ is a group-like scheme,
\tm{2} $\dim_\C Z(\C S)=|\widetilde{S}|$,
\tm{3} for every $\chi, \psi\in \Irr(S)$, $\chi\psi$ is a linear combination of $\Irr(S)$, where $$\chi\psi(\s_s)=\frac{1}{n_s}\chi(\s_s)\psi(\s_s),~~~~\forall s\in S.$$

\enmrt
\end{theorem}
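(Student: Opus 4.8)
The plan is to prove the cycle $(2)\Leftrightarrow(1)\Rightarrow(3)\Rightarrow(1)$, the workhorse being the explicit form of the primitive central idempotents of $\C S$. Using the standard faithful character $\gamma_S$, the orthogonality relation $\gamma_S(\sigma_s\sigma_{t^*})=|X|n_s\delta_{st}$, and the identity $\gamma_S(e_\chi a)=m_\chi\chi(a)$ valid for all $a\in\C S$, one obtains
\[
e_\chi=\frac{m_\chi}{|X|}\sum_{s\in S}\frac{\overline{\chi(\sigma_s)}}{n_s}\,\sigma_s\qquad(\chi\in\Irr(S)),
\]
where $\chi(\sigma_{s^*})=\overline{\chi(\sigma_s)}$ is the usual conjugation rule for characters of the $*$-algebra $\C S$. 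Put $M=\bigoplus_{\widetilde s\in\widetilde S}\C\sigma_{\widetilde s}$.

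First I would settle $(1)\Leftrightarrow(2)$. Conjugating the defining relation of $\sim$ gives $s\sim t\Rightarrow s^*\sim t^*$; feeding this into the displayed formula (and using $n_s=n_{s^*}$) shows that the coefficient of $\sigma_s$ in $e_\chi$ depends only on the $\sim$-class of $s$, so $e_\chi\in M$ for every $\chi$, and hence $Z(\C S)\subseteq M$ with no hypothesis. Since $\C S$ is split semisimple over $\C$ one has $\dim Z(\C S)=|\Irr(S)|$, while $\dim M=|\widetilde S|$ because the $\sigma_{\widetilde s}$ are sums of the linearly independent $\sigma_s$ over the disjoint blocks of $\widetilde S$. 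The scheme is group-like exactly when $Z(\C S)=M$; given the inclusion just proved, this is equivalent to equality of the two dimensions, which is $(2)$.

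For the equivalence with $(3)$ I would pass to the Hadamard (entrywise) product $\circ$ on $\C S$. Since the $\sigma_s$ are $(0,1)$-matrices with pairwise disjoint support, $\sigma_s\circ\sigma_t=\delta_{st}\sigma_s$, so $(\C S,\circ)\cong\C^S$ as algebras, with identity the all-ones matrix $J=\sum_{s}\sigma_s$; here $J\in Z(\C S)$ because $J\sigma_s=\sigma_sJ=n_sJ$, and $M$ is a $\circ$-subalgebra because $\sigma_{\widetilde s}\circ\sigma_{\widetilde t}=\delta_{\widetilde s\widetilde t}\sigma_{\widetilde s}$. If the scheme is group-like, then $e_\chi,e_\psi\in M$, so $e_\chi\circ e_\psi\in M=\bigoplus_{\chi'}\C e_{\chi'}$; rewriting that identity coefficient-by-coefficient in the $\sigma$-basis through the displayed formula turns it into $\chi\psi=\sum_{\chi'}c_{\chi'}\chi'$ for the character product $\chi\psi(\sigma_s)=\chi(\sigma_s)\psi(\sigma_s)/n_s$, which is $(3)$. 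Conversely, $(3)$ means (run the computation backwards) that $Z(\C S)$ is closed under $\circ$; being then a unital $\circ$-subalgebra of $\C^S$, it coincides with the algebra of functions on $S$ constant on the blocks of its own level-set partition, and computing those blocks — $s$ and $s'$ are not separated by any $e_\chi$ iff $\chi(\sigma_s)/n_s=\chi(\sigma_{s'})/n_{s'}$ for all $\chi$, i.e. iff $s\sim s'$ — identifies that partition with $\widetilde S$, so $Z(\C S)=M$ and the scheme is group-like. I expect the main difficulty to be the bookkeeping in these two translations — carrying the scalars $m_\chi/|X|$ and $1/n_s$ correctly between the $e_\chi$-basis of $Z(\C S)$, the $\sigma_{\widetilde s}$-basis of $M$, and the character side, and pinning down the normalization of $\chi\psi$ for which the argument is clean; the supporting facts (orthogonality of $\{\sigma_s\}$ under the $\gamma_S$-form, the conjugation rule $\chi(\sigma_{s^*})=\overline{\chi(\sigma_s)}$, $\dim Z(\C S)=|\Irr(S)|$, and closure of $\C S$ under $\circ$) are all standard for adjacency algebras.
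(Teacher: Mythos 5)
The paper does not actually prove this statement: it is imported verbatim from Hanaki (the cited Theorem 4.1 of ``Nilpotent schemes and group-like schemes''), so there is no in-paper argument to compare against. Your reconstruction is correct and is essentially the standard (and, as far as I can tell, the original) proof. The two load-bearing facts both check out: the idempotent formula $e_\chi=\frac{m_\chi}{|X|}\sum_s \overline{\chi(\sigma_s)}\,n_s^{-1}\sigma_s$ follows exactly as you say from $\gamma_S(\sigma_s\sigma_{t^*})=|X|n_s\delta_{st}$ and $\gamma_S(e_\chi a)=m_\chi\chi(a)$, and it immediately gives the unconditional inclusion $Z(\C S)\subseteq\bigoplus_{\widetilde{s}}\C\sigma_{\widetilde{s}}$, so that $(1)\Leftrightarrow(2)$ reduces to $\dim Z(\C S)=|\Irr(S)|$. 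For $(1)\Leftrightarrow(3)$, the Hadamard-product bookkeeping works with the normalization $\chi\psi(\sigma_s)=\chi(\sigma_s)\psi(\sigma_s)/n_s$ used in the cited source: one gets $e_\chi\circ e_\psi=\sum_{\chi'}\frac{\overline{d_{\chi'}}\,m_\chi m_\psi}{|X|\,m_{\chi'}}e_{\chi'}$ when $\chi\psi=\sum d_{\chi'}\chi'$, and conversely closure of $Z(\C S)$ under $\circ$ together with $J=|X|e_{1_S}\in Z(\C S)$ identifies $Z(\C S)$ with the functions constant on the $\sim$-classes, since the $e_\chi$ separate $s$ from $t$ exactly when $\chi(\sigma_s)/n_s\neq\chi(\sigma_t)/n_t$ for some $\chi$. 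The only points that deserve an explicit word in a written-up version are the ones you already flagged as standard: that $\C S$ is split semisimple over $\C$ (so $\dim Z(\C S)=|\Irr(S)|$) and that $\chi(\sigma_{s^*})=\overline{\chi(\sigma_s)}$ because the adjacency algebra is closed under conjugate transpose. No gaps.
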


The following easy lemma is useful.

\begin{lemma}\label{glike}
An association scheme $(X,S)$ is group-like  if and only if for
every $\chi, \psi \in \Irr(S)$ and every $s, h\in S$, $\chi\psi(\s_s\s_h)=\chi\psi(\s_h\s_s)$.

\end{lemma}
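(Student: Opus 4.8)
The plan is to derive the lemma from part~(3) of Theorem~\ref{glike2}, which characterises group-like schemes by the property that $\chi\psi$ is a $\C$-linear combination of $\Irr(S)$ for all $\chi,\psi\in\Irr(S)$. Since $\chi\psi$ is, by its definition on the basis $\{\s_s\}$ extended linearly, a $\C$-linear functional on the adjacency algebra $\C S$, it is enough to establish the following general fact about the semisimple algebra $\C S$: a linear functional $f$ on $\C S$ lies in the $\C$-span of $\Irr(S)$ if and only if $f(\s_s\s_h)=f(\s_h\s_s)$ for all $s,h\in S$. Applying this with $f=\chi\psi$ and feeding the outcome into Theorem~\ref{glike2}(3) then yields the claimed equivalence at once.

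To prove that general fact I would argue as follows. Write $\C S\cong\bigoplus_{i=1}^{r}M_{d_i}(\C)$ with $r=|\Irr(S)|$, and let $\CC=\{f\in\mathrm{Hom}_\C(\C S,\C): f(ab)=f(ba)\ \text{for all }a,b\in\C S\}$ be the space of trace functionals. On each simple component $M_{d_i}(\C)$ the span of commutators is precisely the subspace of traceless matrices, which has codimension $1$; hence the span $[\C S,\C S]$ of all commutators has codimension $r$ in $\C S$, so $\CC=[\C S,\C S]^{\perp}$ has dimension $r$. Every $\chi\in\Irr(S)$, being a trace function, lies in $\CC$, and the $r$ irreducible characters are linearly independent (for instance $\chi_i$ is nonzero on the central primitive idempotent of the $i$-th component and vanishes on the others); therefore $\Irr(S)$ is a $\C$-basis of $\CC$. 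Consequently $f$ is a $\C$-linear combination of $\Irr(S)$ if and only if $f\in\CC$, i.e.\ $f$ vanishes on every commutator $ab-ba$. Finally, since $\{\s_s:s\in S\}$ spans $\C S$ and the commutator bracket is bilinear, the elements $\s_s\s_h-\s_h\s_s$ span $[\C S,\C S]$, so $f$ kills $[\C S,\C S]$ exactly when $f(\s_s\s_h)=f(\s_h\s_s)$ for all $s,h\in S$.

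Putting the pieces together: $(X,S)$ is group-like $\iff$ (Theorem~\ref{glike2}(3)) every $\chi\psi$ with $\chi,\psi\in\Irr(S)$ is a linear combination of $\Irr(S)$ $\iff$ every such $\chi\psi$ vanishes on all commutators $\iff$ $\chi\psi(\s_s\s_h)=\chi\psi(\s_h\s_s)$ for all $\chi,\psi\in\Irr(S)$ and all $s,h\in S$, which is exactly the assertion of the lemma. I expect the only genuine work to be the linear-algebra step of the second paragraph — the codimension count $\dim[\C S,\C S]=|S|-r$ together with the linear independence of the irreducible characters — while the passage to the basis $\{\s_s\s_h-\s_h\s_s\}$ and the invocation of Theorem~\ref{glike2} are routine.
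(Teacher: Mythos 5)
Your proposal is correct and follows the same route as the paper: reduce the lemma to part (3) of Theorem \ref{glike2} via the fact that a linear functional on the semisimple algebra $\C S$ is a $\C$-linear combination of $\Irr(S)$ if and only if it agrees on $\s_s\s_h$ and $\s_h\s_s$ for all $s,h\in S$. The only difference is that the paper simply cites Hanaki's result \cite[Theorem 4.2]{hana3} for that fact, whereas you prove it directly from the Wedderburn decomposition (commutator subspace of codimension $|\Irr(S)|$, irreducible characters a basis of the trace functionals); your linear-algebra argument is sound and makes the step self-contained.
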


\begin{proof}
For every $\chi, \psi\in \Irr(S)$, consider the linear function $\chi\psi: \C S\rightarrow \C$ by
$$\chi\psi(\displaystyle\sum_{s\in S}\lambda_s\s_s)=\displaystyle\sum_{s\in S}\lambda_s\chi\psi(\s_s).$$
It follows from \cite[Theorem 4.2]{hana3} that $\chi\psi$ is a linear combination of irreducible characters
if and only if $\chi\psi(\s_s\s_h)=\chi\psi(\s_h\s_s)$ for every $s, h\in S$. The result now follows from
Theorem \ref{glike2}.

\end{proof}\hfill\bull

\section{Main Results}
Let $G$ be a finite group. An irreducible character $\chi$ of $G$ is called nonlinear if $\chi(1)>1$.
In this section we first define the concept of a nonlinear  irreducible character
for association schemes and then give a characterization of association schemes with at most two nonlinear irreducible characters.

Let $(X,S)$ be an association scheme. We say that an irreducible character $\chi$ of $S$ is {\it linear} if $m_\chi=1$;
otherwise $\chi$ is called {\it nonlinear}. It follows from
\cite[Lemma 2.4(v)]{hir} that $\chi\in \Irr(S/\!\!/S')$ if and only if $m_\chi=1$. So
$|S:S'|$ is the number of linear characters of $S$ and
$\Irr(S)\setminus\Irr(S/\!\!/{S'})$ is the set of nonlinear  characters of $S$.
In particular, if $(X,S)$ is commutative, then an irreducible character
$\chi\in\Irr(S)$  is linear if and only if  $\chi\in \Irr(S/\!\!/\ooo(S))$ and so $\Irr(S)\setminus\Irr(S/\!\!/\ooo(S))$
is the set of nonlinear  irreducible characters of $S$.

If $(G,S)$ is the group association scheme of $G$, then an irreducible character $\chi$ of $G$ is nonlinear if and only if the irreducible
character $\omega_\chi$ of $S$ is nonlinear; see Example \ref{exam}.

\subsection{Association schemes with one nonlinear irreducible character }
In this section we give a characterization of association schemes with exactly one nonlinear irreducible character.

\begin{lemma}\label{one}
A commutative association scheme $(X,S)$ has exactly one nonlinear irreducible character if and only if
$|\ooo(S)|=2$ and $S$ is the wreath product of association schemes $(X,S)_{x\ooo(S)}$ and  $(X/\!\!/\ooo(S),S/\!\!/\ooo(S))$ for $x\in X$.
\end{lemma}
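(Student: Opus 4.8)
The plan is to reduce the statement to a dimension count. Write $H=\ooo(S)$; since the thin residue is strongly normal, $S/\!\!/H$ is a finite group, and it is abelian because $S$ is commutative. As $S$ is commutative, $\C S$ is a commutative semisimple algebra, so $|\Irr(S)|=\dim\C S=|S|$, and likewise $|\Irr(S/\!\!/H)|=|S/\!\!/H|$; moreover $S/\!\!/H$ is thin, so its number of relations equals its order, which is $n_S/n_H=|S:H|$. As observed above, for a commutative scheme the linear characters are exactly those in $\Irr(S/\!\!/H)$ (here $S'=\ooo(S)$), so the number of nonlinear irreducible characters of $S$ equals $|S|-|S:H|$.

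Granting this, the ``if'' direction is immediate: if $|H|=2$ and $S$ is the wreath product of $(X,S)_{xH}$ and $(X/\!\!/H,S/\!\!/H)$, then Lemma \ref{pedrs} gives $|S|=|H|+|S/\!\!/H|-1=|S:H|+1$, whence $|S|-|S:H|=1$.

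For the ``only if'' direction I would argue as follows. Assume $S$ has exactly one nonlinear irreducible character, i.e.\ $|S|=|S:H|+1$. First, $H\neq\{1\}$, for otherwise $S/\!\!/H\cong S$ and every irreducible character of $S$ would be linear; thus $|H|\ge 2$. Next, the projection $\pi\colon S\to S/\!\!/H$, $s\mapsto s^H$, is surjective; its fibre over the identity $1^H$ is precisely $H$ (since $s^H=1^H$ iff $s\in H$), and its fibres over the remaining $|S/\!\!/H|-1$ relations are nonempty, so $|S|\ge|H|+|S/\!\!/H|-1=|H|+|S:H|-1$. Together with $|S|=|S:H|+1$ this forces $|H|\le 2$, hence $|H|=2$. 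Then $|S|=|S:H|+1=|H|+|S:H|-1$, which is condition (2) of Lemma \ref{pedrs}; that lemma then gives that $S$ is the wreath product of $(X,S)_{xH}$ and $(X/\!\!/H,S/\!\!/H)$.

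I do not expect a genuine obstacle. The load-bearing facts are (a) the identification of the number of nonlinear characters with $|S|-|S:H|$, which uses only that $\C S$ is commutative semisimple and that $S'=\ooo(S)$ in the commutative case, and (b) the elementary counting inequality $|S|\ge|H|+|S:H|-1$ coming from the fibres of $S\to S/\!\!/H$. The one place calling for a little attention is verifying that $|\Irr(S/\!\!/H)|$, the number of relations of $S/\!\!/H$, and $|S:H|$ all coincide, which holds because $H$ strongly normal makes $S/\!\!/H$ a thin abelian scheme.
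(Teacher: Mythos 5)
Your proof is correct and follows essentially the same route as the paper: both directions reduce to the count that the number of nonlinear irreducible characters of a commutative scheme is $|S|-|S/\!\!/\ooo(S)|$, combined with the numerical criterion $|S|=|H|+|S/\!\!/H|-1$ from Lemma \ref{pedrs}. The only cosmetic difference is that the paper settles the converse by citing the wreath-product character theory (Theorem \ref{thwr}) rather than re-running the dimension count, but both are valid.
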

\begin{proof}
First we assume that $S$ contains exactly one nonlinear irreducible character. Then  $|S|=|S/\!\!/\ooo(S)|+1$. Put
$T=\ooo(S)$. Suppose that $S/\!\!/T=\{1^T, s^T_1, \ldots, s^T_n\}$, for some relations $s_i\in S$.
Since $|S|=|S/\!\!/T|+1$, it follows that $S=\{1,t,s_1, \ldots, s_n\}$ and  $T=\{1,t\}$, for some relation $t$.
Now since $$|S|=|S/\!\!/T|+1=|T|+|S/\!\!/T|-1,$$
it follows from Lemma \ref{pedrs} that $S$ is the wreath product of association schemes $(X,S)_{xT}$ and $(X/\!\!/T, S/\!\!/T)$
for some $x\in X$.

Conversely, it follows from \cite[Theorem 4.1]{irrwr} that $S$ contains exactly one nonlinear irreducible character.
\end{proof}\hfill\bull

From the above lemma we can give the following characterization of finite groups with exactly one nonlinear irreducible
character.

\begin{corollary}(See \cite{one}.)\label{con}
Let $G$ be a finite group. Then the following are equivalent:
\nmrt
\tm{1} $G$ has exactly one nonlinear irreducible character,
\tm{2}$G'$ is the union of two conjugacy classes of $G$ and for every $g\in G \setminus G'$,
coset $gG'$ is the conjugacy class of $G$ containing $g$,
\tm{3} $G$ is an extra-special $2$-group, or $G$ is a doubly transitive Frobenius group with a cyclic Frobenius complement and
Frobenius kernel  $G'$ which is an elementary abelian $p$-group.
\enmrt
\end{corollary}
\begin{proof}
$(1)\Rightarrow (2)$ 
Since $G$ has exactly one nonlinear irreducible character,  $(G,S)$ also has one nonlinear irreducible character.
It  follows from Lemma \ref{one} that $|\ooo(S)|=2$ and $S$ is the wreath product of association schemes $(G,S)_{g\ooo(S)}$ and  $(G/\!\!/\ooo(S),S/\!\!/\ooo(S))$ for $g\in G$. This implies that $G'$ contains two conjugacy classes $C_0$ and $C_i$ for some $1\leq i\leq d$
and Lemma \ref{pedrs} shows that $K_iK_j=K_jK_i=|C_i| K_j$ for every $j\neq 0,i$. So we conclude that $C_iC_j \subset C_j$ for every
$j\neq 0, i$. Hence for every $g\in C_j$, $gC_i\subset C_j$. Now from Lemma \ref{val} we have $|C_j|=|G'|=|C_i|+1$  and so $gG'=C_j$.

$(2)\Rightarrow (1)$ Consider the group association scheme $(G,S)$.
Since $G'$  contains two conjugacy classes $C_0$ and $C$, we have $|\ooo(S)|=2$. Moreover, since
for every conjugacy class $C' \not\in \{C_0, C\}$ and $g\in C'$, $C'=gG'$  we get
$C'C\subset C'$, and so $K'K=KK'=|C|K'$. Then it follows from
Lemma \ref{pedrs} that $S$ is the wreath product of association schemes $(G,S)_{g\ooo(S)}$ and
$(X/\!\!/\ooo(S),S/\!\!/\ooo(S))$ for $g\in G$. So Lemma \ref{one} shows that $(G,S)$ contains exactly one
nonlinear irreducible character and hance $G$ also has exactly one nonlinear irreducible character.

$(2)\Rightarrow (3)$ Suppose that $G'$ is the union of two conjugacy classes  $C_0$ and $ C_1$,
and for every $g\in G \setminus G'$, coset $gG'$ is
the conjugacy class of $G$ containing $g$.
Then $G'$ is a $p$-group and so $G$ is solvable. Moreover, since for every conjugacy class $C$ and $g\in C$ we have
$gG'=C$ it follows that $G'$ is the unique minimal normal subgroup of $G$. So it follows from \cite[Lemma 12.3]{is} that
all nonlinear irreducible characters of $G$ have
equal degree $f$ and  one of the following holds:
\nmrt
\tm{a}$G$ is a $p$-group, $Z(G)$ is cyclic and $G/Z(G)$ is elementary abelian,
\tm{b}$G$ is a Frobenius group with an abelian Frobenius complement $H$ of
order $f$. Also, $G'$ is the Frobenius kernel and is an elementary abelian $p$-group.
\enmrt
If $(a)$ holds, then $|C_1|=1$ and $|G'|=2$ . Since $G'\subseteq Z(G)$ and $|C|>1$, for every conjugacy
class $C\not\in \{C_0,C_1\}$, we have $G'=Z(G)$ and $G/G'$ is an elementary abelian $2$-group.
Hence $G$ is an extra-special $2$-group, as desired.

Now suppose $(b)$ holds.  Let $|G'|=p^n$. Since $|G/G'|=|H|=f$, we have $|G|=fp^n.$ On the other hand,
since statements $(1)$ and $(2)$
are equivalent, $G$ has one nonlinear irreducible character and so
$$|G|=\sum_{\chi\in \Irr(G)}{\chi(1)}^2=f^2+|G/G'|.$$ Then $f=p^n-1$ and $G$ is a  Frobenius group
of order $(p^n - 1)p^n$. Since $H$ is abelian and every Sylow subgroup of $H$ is cyclic or a generalized
quaternion group we conclude that $H$ is cyclic.

$(3)\Rightarrow (2)$ Suppose that  $G$ is an extra-special $2$-group, or $G$ is a doubly transitive Frobenius group with a
cyclic Frobenius complement and  Frobenius kernel $G'$ which is an elementary abelian $p$-group.
Then in both cases $G$ has exactly one nonlinear irreducible character; see \cite{hup}.
Since the statements $(1)$ and $(2)$ are equivalent, it follows that $G'$ is the union of  two conjugacy classes and
for every $g\in G\setminus G'$, coset $gG'$ is the
conjugacy class of $G$ containing $g$.

\end{proof} \hfill\bull

\begin{theorem}
An association scheme $(X,S)$ has exactly one nonlinear irreducible character if and only if it is a group-like scheme,
$|\widetilde{S'}|=2$ and $\widetilde{S}$ is the wreath product of association schemes $(X,\widetilde{S})_{x\widetilde{S'}}$ and  $(X/\!\!/\widetilde{S'},\widetilde{S}/\!\!/\widetilde{S'})$ for $x\in X$.
\end{theorem}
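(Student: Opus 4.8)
The plan is to reduce this theorem to its commutative special case, Lemma \ref{one}, by passing from a group-like scheme $(X,S)$ to the associated commutative scheme $(X,\widetilde{S})$. Recall that when $(X,S)$ is group-like, $Z(\C S)=\bigoplus_{\widetilde{s}\in\widetilde{S}}\C\s_{\widetilde{s}}$ is the adjacency algebra of a commutative association scheme $(X,\widetilde{S})$, and its primitive idempotents are precisely the central primitive idempotents $e_\chi$ of $\C S$, one for each $\chi\in\Irr(S)$; this yields a bijection $\chi\mapsto\widetilde{\chi}$ from $\Irr(S)$ onto $\Irr(\widetilde{S})$. The key observation is that the multiplicities are linked by $m_{\widetilde{\chi}}=m_\chi\,\chi(1)$: as an operator on the standard module, $e_\chi$ is the projection onto the $\chi$-homogeneous component, of dimension $m_\chi\chi(1)$, while in the commutative scheme $(X,\widetilde{S})$ the multiplicity of $\widetilde{\chi}$ is the rank of that same idempotent. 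Now $m_\chi=1$ forces $\chi\in\Irr(S/\!\!/S')$ by \cite[Lemma 2.4(v)]{hir}, and since $S/\!\!/S'$ is an abelian group this forces $\chi(1)=1$; hence $m_\chi>1$ if and only if $m_{\widetilde{\chi}}=m_\chi\chi(1)>1$. So $\chi\mapsto\widetilde{\chi}$ carries the nonlinear characters of $S$ bijectively onto those of $\widetilde{S}$, and in particular $S$ has exactly one nonlinear irreducible character precisely when $\widetilde{S}$ does.

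With this dictionary the ``if'' direction is immediate: if $S$ is group-like, $|\ooo(\widetilde{S})|=2$ and $\widetilde{S}$ is the stated wreath product, then Lemma \ref{one} applied to the commutative scheme $(X,\widetilde{S})$ shows that $\widetilde{S}$, and hence $S$, has exactly one nonlinear irreducible character. For the ``only if'' direction, suppose $S$ has exactly one nonlinear irreducible character $\chi_0$; then $\Irr(S)=\Irr(S/\!\!/S')\cup\{\chi_0\}$, every character other than $\chi_0$ has degree $1$, and $\C S\cong\C^{\,|\Irr(S)|-1}\oplus\Mat_d(\C)$ with $d=\chi_0(1)$. I would first record that $\chi_0$ is real-valued --- otherwise $\overline{\chi_0}\neq\chi_0$ would be a second nonlinear character --- and that $\chi_0\lambda=\chi_0$ for every linear character $\lambda$ of $S$: the linear characters form $\Irr(S/\!\!/S')$, which acts on $\Irr(S)$ by the product of \cite[Theorem 3.3]{hana3}, so $\chi_0\lambda\in\Irr(S)$, and having degree $d>1$ it must equal $\chi_0$. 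From this one gets $\chi_0(\s_s)=0$ for every $s\in S\setminus S'$ (if $\chi_0(\s_s)\neq0$ then $\lambda(\s_s)=\lambda(\s_1)$ for all linear $\lambda$, which forces the image of $s$ in the abelian group $S/\!\!/S'$ to be the identity, i.e.\ $s\in S'$), and consequently $\chi_0^2$ too is supported on $S'$.

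The heart of the proof is to show that $S$ is group-like. By Theorem \ref{glike2}(3) it suffices to prove that $\chi\psi$ is a linear combination of $\Irr(S)$ for all $\chi,\psi\in\Irr(S)$. Whenever at least one of $\chi,\psi$ is linear this follows from the action of $\Irr(S/\!\!/S')$ on $\Irr(S)$ given in \cite[Theorem 3.3]{hana3} (and in the mixed case the product is $\chi_0$ again); the only remaining case is $\chi_0^2$, and one must show that the functional $\s_s\mapsto\chi_0(\s_s)^2$ lies in the span of $\Irr(S)$, which by \cite[Theorem 4.2]{hana3} is equivalent to the identity $\chi_0\chi_0(\s_s\s_h)=\chi_0\chi_0(\s_h\s_s)$ for all $s,h\in S$. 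This last step is where I expect the real difficulty to lie. The leverage available is that the commutator $\s_s\s_h-\s_h\s_s$ always lies in the traceless part of the $\Mat_d(\C)$-block of $\C S$, together with the vanishing $\chi_0(\s_s)=0$ for $s\notin S'$, the self-conjugacy of $\chi_0$, and the Clifford-type description of $\chi_0|_{S'}$ provided by Theorem \ref{cli}; the goal is to make these constraints rigid enough that $\s_u\mapsto\chi_0(\s_u)^2$ annihilates every such commutator. An equivalent and perhaps more convenient target is the inequality $|\widetilde{S}|\le|\Irr(S)|$: one always has $Z(\C S)\subseteq\bigoplus_{\widetilde{s}}\C\s_{\widetilde{s}}$, since the $\s_{\widetilde{s}}$-coefficients of any central element are constant on $\sim$-classes (the $\s_s$-coefficient of $e_\chi$ is proportional to $\chi(\s_{s^*})/n_s$), so that $|\Irr(S)|$, the dimension of $Z(\C S)$, is at most $|\widetilde{S}|$; group-likeness is precisely the reverse inequality.

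Once $S$ has been shown to be group-like, $(X,\widetilde{S})$ is a commutative association scheme which, by the first paragraph, again has exactly one nonlinear irreducible character. Applying Lemma \ref{one} to $\widetilde{S}$ then gives $|\ooo(\widetilde{S})|=2$ and the decomposition of $\widetilde{S}$ as the wreath product of $(X,\widetilde{S})_{x\ooo(\widetilde{S})}$ and $(X/\!\!/\ooo(\widetilde{S}),\widetilde{S}/\!\!/\ooo(\widetilde{S}))$, which is the assertion. In summary, the only non-routine ingredient is the group-likeness of a scheme with exactly one nonlinear irreducible character; the rest is an application of Lemma \ref{one} through the correspondence $\Irr(S)\leftrightarrow\Irr(\widetilde{S})$.
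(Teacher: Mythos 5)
Your overall strategy coincides with the paper's: establish that $S$ is group-like, pass to the commutative scheme $(X,\widetilde S)$ via $m_{\widetilde\chi}=m_\chi\chi(1)$, and invoke Lemma \ref{one}. The ``if'' direction and the vanishing $\chi_0(\s_s)=0$ for $s\in S\setminus S'$ are argued exactly as in the paper. But there is a genuine gap at precisely the point you flag yourself: you never actually prove group-likeness. You reduce it to showing $\chi_0\chi_0(\s_s\s_h)=\chi_0\chi_0(\s_h\s_s)$ for all $s,h$ (equivalently $|\widetilde S|\le|\Irr(S)|$), list the constraints you hope will force this, and then stop at ``this is where I expect the real difficulty to lie.'' Since you correctly identify group-likeness as the only non-routine ingredient, the proposal as written does not prove the theorem.

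The paper closes this gap not by analysing commutators but by directly counting the $\sim$-classes, i.e.\ by establishing exactly your ``equivalent target'' $|\widetilde S|\le|\Irr(S)|$. The one computation you are missing is short: for $1\neq s\in S'$, the relation $\sum_{\psi\in\Irr(S)}m_\psi\psi(\s_s)=\gamma_S(\s_s)=0$, together with $m_\psi=1$ and $\psi(\s_s)=n_s$ for every linear $\psi$ (because $s^{S'}$ is the identity of $S/\!\!/S'$), gives
$$\frac{\chi_0(\s_s)}{n_s}=\frac{-|S/\!\!/S'|}{m_{\chi_0}},$$
a constant independent of $s$. Hence $S'\setminus\{1\}$ is a single $\sim$-class. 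Combined with your observation that $\chi_0$ vanishes off $S'$ (so that for $s,t\in S\setminus S'$ one has $s\sim t$ if and only if $s^{S'}=t^{S'}$, the linear characters alone deciding the matter), this yields $|\widetilde S|=1+1+(|S/\!\!/S'|-1)=|S/\!\!/S'|+1=|\Irr(S)|=\dim Z(\C S)$, and Theorem \ref{glike2} gives group-likeness. Replacing your speculative paragraph with this computation makes the rest of your argument go through essentially as in the paper.
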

\begin{proof}
Suppose that $(X,S)$ has exactly one nonlinear irreducible character. First note that for
every $s,t \in S\setminus S'$,  $s^{S'}=t^{S'}$ if and only if
for every $\psi\in \Irr(S/\!\!/{S'})$, $$\psi(\s_s)/n_s=\psi(\s_{s^{S'}})=\psi(\s_{t^{S'}})=\psi(\s_t)/n_t$$
(see \cite[Theorem 3.5]{hanakirep}). On the other hand,
since $\chi$ is the only irreducible character of $S$ with $m_\chi>1$
we conclude that the orbit of $\chi$ has length $1$ under the action of $\Irr(S/\!\!/S')$ on $\Irr(S)$. This implies that for every  $\zeta\in \Irr(S/\!\!/S')$,
$\chi\zeta=\chi$. Since for every $s\in S\setminus S'$, there exists $\zeta\in \Irr(S/\!\!/S')$
such that $\zeta(\s_{s^{S'}})\neq 1$, from equality  $\chi(\s_s)=\chi(\s_s)\zeta(\s_{s^{S'}})$ we have $\chi(\s_s)=0$.
Then for every $s,t \in S\setminus S'$, $s^{S'}=t^{S'}$ if and only if
for every $\psi\in \Irr(S)$, $\psi(\s_s)/n_s=\psi(\s_t)/n_t$.

Now let $1\neq s\in S'$. Since $\displaystyle\sum_{\psi\in \Irr(S)}m_\psi \psi(\s_s)=0$,
it follows that  $$m_\chi \chi(\s_s)+\displaystyle\sum_{\chi\neq\psi\in \Irr(S)}m_\psi \psi(\s_s)=0.$$
Then
\begin{eqnarray}\nonumber
\chi(\s_s)=\frac{-|S/\!\!/{S'}|n_s}{m_\chi}
\end{eqnarray}
and hence
\begin{eqnarray}\label{hhh}
\chi(\s_s)/n_s=\frac{-|S/\!\!/{S'}|}{m_\chi}.
\end{eqnarray}
So for nondiagonal relations  $s, t\in S'$,  (\ref{hhh})
shows that $$\frac{\chi(\s_t)}{n_t}=\frac{-|S/\!\!/{S'}|}{m_\chi}=\frac{\chi(\s_s)}{n_s}.$$
Thus $$\frac{\psi(\s_s)}{n_s}=\frac{\psi(\s_t)}{n_t},$$ for every $\psi\in \Irr(S)$.

Now let $\sim$ be the equivalence relation which is defined in (\ref{eqn}).
From the above we conclude that  $$\widetilde{S}=1\cup \widetilde{t} \cup \{s^{S'}| s\in S\setminus S'\},$$
for some $t\in S'$. Then $|\widetilde{S}|=|S/\!\!/{S'}|+1$. Since $\dim _\C Z(\mathbb{C}S)=|\Irr(S)|=|S/\!\!/{S'}|+1$  we have
$\C \widetilde{S}=Z(\mathbb{C}S)$ and it follows from Theorem \ref{glike2} that $(X,S)$ is a group-like scheme.
Since $(X,\widetilde{S})$ has exactly one nonlinear irreducible character,  Lemma \ref{one} shows that
$|\ooo(\widetilde{S})|=2$ and $\widetilde{S}$ is the wreath product of association schemes $(X,\widetilde{S})_{x\ooo(\widetilde{S})}$ and  $(X/\!\!/\ooo(\widetilde{S}),\widetilde{S}/\!\!/\ooo(\widetilde{S}))$ for $x\in X$. Since $\ooo(\widetilde{S})=\widetilde{S'}$,
the result follows.

Conversely, it follows from Lemma \ref{one} that
$(X,\widetilde{S})$ has exactly one nonlinear irreducible character. Since $m_{\widetilde{\chi}}=\chi(1) m_\chi$, for every
$\chi\in \Irr(S)$ we conclude that $(X,S)$ has exactly one nonlinear irreducible character. The proof is now complete.
\end{proof}\hfill\bull

\subsection{Association schemes with two nonlinear irreducible characters}

In this section we first give a characterization of commutative association schemes with
exactly two nonlinear irreducible characters. Then we give a class of association schemes with two
nonlinear irreducible characters.

\begin{theorem}\label{20}
Let $(X,S)$ be a commutative association scheme such that $|S|>3$.  Then $(X,S)$ contains exactly two nonlinear irreducible characters
if and only if  one of the following holds:
\nmrt
\tm{i}$|\ooo(S)|=3$ and $S$ is the wreath product of association schemes
$(X,S)_{x\ooo(S)}$ and $(X/\!\!/\ooo(S), S/\!\!/\ooo(S))$ for $x\in X$,
\tm{ii}$|\ooo(S)|=2$ and there exists a strongly normal closed subset $H$ of $S$ containing $\ooo(S)$ such that $|H|=4$,
$|S:H|=2$, and $S$ is the wedge product of $(X,S)_{xH}$ and $(X/\!\!/\ooo(S), S/\!\!/\ooo(S))$ for some $x\in X$.
\enmrt
\end{theorem}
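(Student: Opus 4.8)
The plan is to translate the condition into a count. For a commutative scheme $(X,S)$ one has $|\Irr(S)|=\dim_{\C}\C S=|S|$, while by the criterion recalled at the start of the section the linear characters are precisely those lying in $\Irr(S/\!\!/\ooo(S))$, a set of size $|S/\!\!/\ooo(S)|$ (an abelian group); hence the number of nonlinear characters equals $|S|-|S/\!\!/\ooo(S)|$. Write $T=\ooo(S)$. For the forward implication the hypothesis reads $|S|=|S/\!\!/T|+2$. The double cosets $TsT$ partition $S$ into $|S/\!\!/T|$ blocks, the one through the identity being $T$ itself, so $|S|\geq|T|+|S/\!\!/T|-1$ and therefore $|T|\leq 3$; since there are nonlinear characters, $T\neq\{1\}$, so $|T|\in\{2,3\}$. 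If $|T|=3$ then $|S|=|T|+|S/\!\!/T|-1$, and Lemma \ref{pedrs} applied to the closed subset $T$ shows that $S$ is the wreath product of $(X,S)_{xT}$ and $(X/\!\!/T,S/\!\!/T)$, which is case (i).

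The substantial case is $|T|=2$, say $T=\{1,t\}$; let $\rho$ be the unique non-principal irreducible character of $T$, so $\rho(\s_t)=-1$. I would first record three facts. (a) Every nonlinear $\varphi\in\Irr(S)$ satisfies $\varphi_T=\rho$, hence $\varphi(\s_t)=-1$: by Theorem \ref{cli} the restriction $\varphi_T$ is a multiple of a single $(S/\!\!/T)$-orbit of $\Irr(T)$, and as $|\Irr(T)|=2$ both orbits are singletons and $\varphi(1)=1$, so $\varphi_T\in\{1_T,\rho\}$; but $\varphi_T=1_T$ would make $\varphi$ trivial on $\ooo(S)$, hence linear. (b) Since $\C S$ is commutative, every character is an algebra homomorphism, so if $u\in S\setminus T$ lies in a singleton double coset $TuT=\{u\}$ (so $\s_t\s_u=n_t\s_u$) then $n_t\varphi(\s_u)=\varphi(\s_t\s_u)=\varphi(\s_t)\varphi(\s_u)=-\varphi(\s_u)$, forcing $\varphi(\s_u)=0$. (c) From $|S|=|T|+|S/\!\!/T|$, exactly one double coset $C\neq T$ has size $2$ and all the remaining double cosets outside $T$ are singletons. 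Now $\Irr(S/\!\!/T)$ acts on $\Irr(S)$ and permutes the two nonlinear characters $\varphi_1,\varphi_2$ (twisting by a linear character does not change the restriction to $T$), and this orbit has length $2$: if both $\varphi_i$ were fixed they would, by (a), be supported on $T$ with identical values $\varphi_i(\s_1)=1$, $\varphi_i(\s_t)=-1$, forcing $\varphi_1=\varphi_2$. Let $N\leq\Irr(S/\!\!/T)$ be the common stabilizer, of index $2$, and let $H$ be the preimage in $S$ of the order-$2$ subgroup $N^{\perp}$ of $S/\!\!/T$. Then $H$ is a closed subset, strongly normal since $ss^{*}\subseteq\ooo(S)=T\subseteq H$ for all $s\in S$, with $|H:\ooo(S)|=|N^{\perp}|=2$, and $\varphi_1,\varphi_2$ vanish off $H$ by the definition of $N$. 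Finally, since $\varphi_1\neq\varphi_2$ there is an $s\notin T$ with $\varphi_1(\s_s)\neq0$; by (b) that $s$ lies in $C$, and by the vanishing it lies in $H$, so the non-identity coset of $T$ in $H$ equals $C$ and $|H|=|T|+|C|=4$. This is case (ii).

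For the converse, set $T=\ooo(S)$. If (i) holds, Lemma \ref{pedrs} gives $|S|=|T|+|S/\!\!/T|-1=|S/\!\!/\ooo(S)|+2$, so the count above yields exactly two nonlinear characters. If (ii) holds, then $|T|=2$, and I would argue as follows. Restriction $\varphi\mapsto\varphi_H$ sends the nonlinear characters of $S$ into $\Irr(H)$ (as $\varphi(1)=1$, $\varphi_H$ is an irreducible character of $\C H$), and it is injective, because a nonlinear character vanishes off $H$ and hence is determined by $\varphi_H$. Its image is contained in $\{\psi\in\Irr(H):\psi(\s_t)=-1\}$, since $\varphi(\s_t)=-1$ for nonlinear $\varphi$ exactly as in (a); and, by the same Clifford argument inside $H$ (with $T$ strongly normal in $H$), an irreducible character of $H$ has $\psi(\s_t)=-1$ precisely when it is not trivial on $T$, so this set has size $|\Irr(H)|-|\Irr(H/\!\!/T)|=|H|-|H:\ooo(S)|=4-2=2$. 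Hence $S$ has at most two nonlinear characters. On the other hand $|\ooo(S)|=2$ forces at least one nonlinear character (since $|S|>|S/\!\!/\ooo(S)|$), and it cannot have exactly one: if it did, Lemma \ref{one} would make $S$ the wreath product of $(X,S)_{xT}$ and $(X/\!\!/T,S/\!\!/T)$, whence $|S|=|T|+|S/\!\!/T|-1$ and every double coset outside $T$ would be a singleton, so no closed subset with $|H|=4$ and $|H:\ooo(S)|=2$ could exist, contradicting the hypothesis of (ii). Therefore $S$ has exactly two nonlinear characters.

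The delicate point is the $|T|=2$ case of the forward implication — producing the closed subset $H$ and checking at once that it is closed, strongly normal, of order $4$ and index $2$ over $\ooo(S)$, and that the nonlinear characters vanish off it. The efficient route is to use that commutativity makes the irreducible characters algebra homomorphisms; this, combined with the $\Irr(S/\!\!/T)$-action on the pair of nonlinear characters, pins down exactly where those characters vanish, which is what makes $H$ emerge, rather than attempting to reconstruct $H$ from the double-coset combinatorics by hand.
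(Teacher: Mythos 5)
Your proof is correct, but it follows a genuinely different route from the paper's in both directions. The paper splits the forward implication according to whether the two nonlinear characters $\chi,\psi$ have equal restriction to $T=\ooo(S)$, deduces $|T|=3$ or $|T|=2$ from that, and in the second case builds $H=\{1,s,g,h\}$ by hand, checking closure with the intersection numbers $\lambda_{sgf}n_f=\lambda_{fgs}n_s$ and then running a second $\Irr(S/\!\!/H)$-orbit argument to get the vanishing off $H$; the converse of (i) is obtained from the wreath-product character theory (Theorem \ref{thwr}), and the converse of (ii) by identifying the nonlinear characters with $\lambda^S,\mu^S$. You instead (1) reduce everything to the identity (number of nonlinear characters) $=|S|-|S/\!\!/T|$ plus the double-coset inequality $|S|\geq |T|+|S/\!\!/T|-1$, which pins down $|T|\in\{2,3\}$ at once and settles case (i) in both directions purely by counting, with no appeal to Theorem \ref{thwr}; (2) in the $|T|=2$ case you exploit that irreducible characters of a commutative scheme are algebra homomorphisms to kill them on singleton double cosets, and you produce $H$ by duality, as the preimage of $N^\perp$ for $N$ the stabilizer of the nonlinear pair under the $\Irr(S/\!\!/T)$-action — so closure, strong normality, $|H:T|=2$ and the vanishing off $H$ all come for free, and $|H|=4$ drops out of the unique double coset of size $2$; (3) your converse of (ii) is a two-sided count (at most two via injective restriction to $H$, at least two by excluding the one-character case through Lemma \ref{one}), which is actually more careful than the paper's corresponding step. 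What your approach buys is economy and transparency in the construction of $H$; what the paper's buys is an explicit description of $H$ in terms of relations, which it reuses in the group-theoretic corollary. The only places where you lean on unstated facts are the factor-scheme correspondence ($\varphi_T=1_T$ iff $\varphi\in\Irr(S/\!\!/T)$, from \cite{hanakirep}) and the self-conjugacy of $1_T$ in the Clifford argument; both are standard and are also used implicitly by the paper, so they are citation gaps rather than mathematical ones.
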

\begin{proof}
Let $\chi$ and $ \psi$ be two nonlinear irreducible characters of $S$. Put $T=\ooo(S)$. We consider two cases.

First, suppose that $\chi_T\neq \psi_T$. Then   $\Irr(T)=\{1_T, \chi_T, \psi_T\}$.
Since  $\chi_T\neq \psi_T$, it follows that
the orbits of $\chi$ and $\psi$ have length 1 under the action of $\Irr(S/\!\!/T)$ on $\Irr(S)$.
So for every $\zeta\in \Irr(S/\!\!/T)$ we have $\chi\zeta=\chi$ and $\psi\zeta=\psi$.
Since for every $s\in S\setminus T$, there exists $\zeta\in \Irr(S/\!\!/T)$ such that $\zeta(\s_{s^{T}})\neq 1$,
from equalities $\chi(\s_s)\zeta(\s_{s^{T}})=\chi(\s_s)$ and $\psi(\s_s)\zeta(\s_{s^{T}})=\psi(\s_s)$,
we conclude that $\chi(\s_s)=\psi(\s_s)=0$.
Now we show that for every $s,t\in S\setminus T$, $s^T\neq t^T$ and hence
$|S/\!\!/T|=|S|-|T|+1$. Suppose that $s^T=t^T$. Then for every $\varphi\in \Irr(S/\!\!/T)$, we have
$$\varphi(\s_s)/n_s=\varphi(\s_{s^T})=\varphi(\s_{t^T})=\varphi(\s_t)/n_t.$$
Since $\chi(\s_s)=\psi(\s_t)=0$, it follows that for every $\varphi\in \Irr(S)$,  $\varphi(\s_s)/n_s=\varphi(\s_t)/n_t$.
This is a contradiction since the character table of $\C S$ is a nonsingular matrix. So $|S/\!\!/T|=|S|-|T|+1$.  This implies that  $|S|=|S/\!\!/T|+2=|S/\!\!/T|+|T|-1$
and Lemma \ref{pedrs} shows that $S$ is the wreath product of association schemes $(X,S)_{xT}$ and $(X/\!\!/T,S/\!\!/T)$.

Second, suppose that $\chi_T=\psi_T$. Then $\Irr(T)=\{1_T, \chi_T\}$ and so $|T|=2$.
Let $T=\{1,s\}$, for some $s\in T$. Then since $|S|=|S/\!\!/T|+2$, it follows that there are
exactly two relations $g, h\in S\setminus T $ such that
$g^T=h^T$. So $H=\{1, s,g,h\}$ is a closed subset of $S$ such that $h\in sg$. Moreover, since $g^T$ is an involution of $S/\!\!/T$ it follows that $H/\!\!/T$ is a subgroup of $S/\!\!/T$ and thus $H$ is a strongly normal closed subset of $S$ with $|H:T|=2$.

Now since $T\subseteq H$ and for every $s\in S\setminus H$, $sT=\{s\}$ it follows that
$S$ is the wedge product of $(X,S)_{xH}$ and $(X/\!\!/T, S/\!\!/T)$ for some $x\in X$.
\\

Conversely, if $(i)$ holds, then it follows from \cite[Theorem 4.1]{irrwr} that
$(X,S)$ contains exactly two nonlinear irreducible characters.

Now suppose that $(X,S)$ satisfies condition $(ii)$.
First we show that for every $\chi \in \Irr(S)\setminus \Irr(S/\!\!/\ooo(S))$ and $s\in S\setminus H$,
$\chi(\s_s)=0$. Since $\ooo(S)\nsubseteq \ker(\chi)$, there exists at least $t\in \ooo(S)$ such that $\chi(\s_t)\neq n_t$.
On the other hand, $\s_t\s_s=n_t\s_s$. So equality $\chi(\s_t)\chi(s_t)=n_t\chi(\s_s)$ shows that $\chi(\s_s)=0$.
Moreover, since $|H|=4$ and $|H:\ooo(S)|=2$, we
have exactly two irreducible characters $\lambda, \mu\in \Irr(H) \setminus \Irr(H/\!\!/\ooo(S))$. Let
$\chi$ and $\psi$ be irreducible characters of $S$ such that $(\chi_H, \lambda)\neq 0$ and $(\psi_H, \mu)\neq 0$.
Now consider the sequence
$$
H=H_0\subseteq H_1\subseteq \ldots \subseteq H_n=S
$$
of closed subsets of $S$ such that $H_i/\!\!/H_{i-1}$ is a group of prime order.
Since $\chi(\s_s)=\psi(\s_s)=0$ for every $s\in S\setminus H$,
it follows from  Theorem \ref{pindex} that $\lambda^{H_i}$  and $ \mu^{H_i}$ are irreducible characters of $H_i$.
Hence we conclude that $\chi=\lambda^S$ and $\psi=\mu^S$. Clearly, for every nonlinear irreducible character $\varphi$  of $S$,
we must have  $\varphi=\lambda^S$ or
$\varphi=\mu^S$. Thus  $(X,S)$  contains exactly two nonlinear irreducible characters.
The proof is now complete.
\end{proof}\hfill\bull

From the above theorem we can obtain the following characterization of finite groups with exactly two nonlinear irreducible
characters.

\begin{corollary}\label{corj}
A finite group $G$ has exactly two nonlinear irreducible characters if and only if one of the following
holds:
\nmrt
\tm{i}$G'$ is the union of three conjugacy classes of $G$ and for every $g\in G\setminus G'$, coset $gG'$ is
the conjugacy class of $G$ containing $g$,
\tm{ii}$G'$ is the union of two conjugacy classes of $G$ and there exists a normal subgroup $H$ of $G$ containing $G'$ such that
$H$ is the union of four conjugacy classes of $G$, $|H:G'|=2$ and for every $g\in G\setminus H$, cost $gG'$ is the conjugacy class of $G$
containing $g$.
 \enmrt
\end{corollary}
\begin{proof}
Let $(G,S)$ be the group association scheme of $G$. Clearly, if $G$ has exactly two nonlinear irreducible characters, then
$(G,S)$ also has two nonlinear irreducible characters and $|S|>3$.
Then it follows from Theorem \ref{20} that $G$ has exactly two nonlinear irreducible characters if and only if
one of the following holds:
\nmrt
\tm{1} $|\ooo(S)|=3$ and $S$ is the wreath product of association schemes $(G,S)_{g\ooo(S)}$ and
$(G/\!\!/\ooo(S),S/\!\!/\ooo(S))$ for $g\in G$,
\tm{2} $|\ooo(S)|=2$ and there exists a strongly normal closed subset $H$ of $S$ containing $\ooo(S)$ such that $|H|=4$, $|H:\ooo(S)|=2$
and $S$ is the wedge product of $(G,S)_{gH}$ and $(G/\!\!/\ooo(S),S/\!\!/\ooo(S))$ for $g\in G$.
\enmrt
We have statement $(1)$ if and only if $G'$ is the union of three conjugacy classes $C_0$, $C_1$ and $C_2$ and for every conjugacy
class $C_i\not\in \{C_0,C_1,C_2\}$,  $K_iK_1=K_1K_i=|C_1| K_i$ and
$K_iK_2=K_2K_i=|C_2| K_i$; see Lemma \ref{pedrs}. This shows that statement $(1)$ holds if and only if
$G'=C_0\cup C_1\cup C_2$ and for every conjugacy
class $C_i\not\in \{C_0,C_1,C_2\}$,
$C_1C_i \subset C_i$ and $C_2C_i \subset C_i$ and
so for every $x\in C_i$,  $xG'\subseteq C_i$.
Thus if $(i)$ occurs, then we clearly have statement $(1)$. Conversely, if statement $(1)$ holds, then
by using Lemma \ref{val} we have $|xG'|=|G'|=|C_i|$ and thus $xG'=C_i$. Hence $(i)$ holds.
\\

Statement $(2)$ holds if and only if $G'$ is the union of two conjugacy classes $C_0$ and $C_1$, $H$
is a normal subgroup of $G$ containing $G'$ with four conjugacy classes,  $|H:G'|=2$ and moreover,
for every conjugacy class $C_i $ of $G$ where $C_i\nsubseteq H$, $K_iK_1=K_1K_i=|C_1| K_i$.
The latter equality occurs if and only if for every conjugacy class $C_i $ of $G$ where $C_i\nsubseteq H$, $C_1C_i \subset C_i$ and
so  $gG'\subseteq C_i$ for every $g\in C_i$. Then we clearly have $(2)$ if $(ii)$ occurs. Conversely, suppose that $(2)$ holds.
Then $G'$ is the union of two conjugacy classes $C_0$ and $C_1$, $H$
is a normal subgroup of $G$ containing $G'$ with four conjugacy classes,  $|H:G'|=2$ and
for every conjugacy class $C_i $ of $G$ where $C_i\nsubseteq H$, $gG'\subseteq C_i$ for every $g\in C_i$.
Moreover, from Lemma \ref{val}, we have $|gG'|=|G'|=|C_i|$. So $gG'=C_i$ and statement $(ii)$ holds.

\end{proof}\hfill\bull

\begin{corollary}
A finite group $G$  has exactly two nonlinear irreducible characters if and only if one of the following
holds:
\nmrt
\tm{i} $G$ is an extra-special $3$-group,
\tm{ii}$G$ is a Frobenius group of order $\frac{p^n (p^{n}-1)}{2}$ with a cyclic Frobenius complement and
Frobenius kernel $G'$ which is an elementary abelian group of order $p^n$,
\tm{iii} $G$ is a Frobenius group with  Frobenius kernel $N$, an elementary abelian group of order $9$ such that $|G':N|=2$,
and with the Frobenius complement $Q_8$,
\tm{iv} $G$ is a $2$-group, $Z(G)$ is cyclic of order $4$ containing $G'$, and $G/Z(G)$ is elementary abelian.
 \enmrt

\end{corollary}
\begin{proof}
First assume that $G$ has exactly two nonlinear irreducible characters. Then one of the statements $(i)$ and $(ii)$ of
Corollary \ref{corj} holds.

Suppose that statement $(i)$ holds.  Let $G'$ be the union of conjugacy classes $C_0=\{1\}, C_1$ and $ C_2$.
We consider two cases.

First, suppose that $C_2=C^{-1}_1$. Then $G'$ is a $p$-group and so $G$ is solvable. Moreover, since for every $g\in G\setminus G'$,
coset $gG'$ is the conjugacy class of $G$ containing $g$, it follows that $G'$ is the unique minimal normal subgroup of $G$.
So it follows from \cite[Lemma 12.3]{is} that all nonlinear irreducible characters of $G$ have
equal degree $f$ and  one of the following holds:
\nmrt
\tm{a}$G$ is a $p$-group, $Z(G)$ is cyclic and $G/Z(G)$ is elementary abelian,
\tm{b}$G$ is a Frobenius group with an abelian Frobenius complement $H$ of
order $f$. Also, $G'$ is the Frobenius kernel and is an elementary abelian $p$-group.
\enmrt
If $(a)$ holds, then since $Z(G)\neq \{1\}$ and $G'\subseteq Z(G)$ we have $|G'|=3$ and $G'=Z(G)$. So $G$ is an extra-special
$3$-group.

Now suppose that $(b)$ holds. Then since $|G/G'|=f$ it follows that
$$f|G'|=|G|=\sum_{\chi\in \Irr(G)}\chi(1)^2=2f^2+|G/G'|=2f^2+f,$$
and so $f=\frac{|G'|-1}{2}$. Let $|G'|=p^n$. Then $G$ is a Frobenius group of order $\frac{p^n(p^n-1)}{2}$ with Frobenius kernel $G'$ and
a cyclic Frobenius complement of order  $\frac{p^n-1}{2}$.

Second, assume that $C^{-1}_1=C_1$ and $C^{-1}_2=C_2$. Clearly, in this case $G'$ cannot be abelian. Since $|G'|$ has at most two prime divisors it follows that $G'$ is solvable.
So $G$ is also solvable.  Moreover,  since $G'$ is solvable, $C_0\cup C_1$ or $C_0\cup C_2$
is a normal subgroup of $G$. Without loss in generality, assume that $L=C_0\cup C_1$ is a normal subgroup of $G$. Consider quotient
group $G/L$. Then $G/L$ has exactly one nonlinear irreducible character and it follows from Corollary \ref{con} that either
$G/L$ is an extra-special $2$-group, or $G/L$ is a doubly transitive Frobenius group with a cyclic complement and
Frobenius kernel $G'/L$ which is an elementary abelian $p$-group.

If $G/L$ is an extra-special $2$-group, then $|G'/L|=2$, $G'/L=Z(G/L)$ and $G/G'$ is a $2$-group. Clearly, $G$ is not a $2$-group and
\cite[Proposition 1]{camina} shows that $G$ is not a Frobenius group with Frobenius kernel $G'$. Let
$P$ be a $2$-Sylow subgroup of $G$. Since $G/L\simeq P$,  $P$ has class at most $2$. It follows  from
\cite[Theorem 5.1]{ch} that $G$ is a Frobenius group such that its Frobenius kernel has index $2$ in $G'$ with Frobenius complement
$Q_8$. Let $|L|=p^m$. Since $|C_1|=p^m-1$ divides $|G|=8p^m$ it follows that $p^m-1\mid 8$. So
$p=3$ and $m=2$. This is statement $(iii)$.

Now suppose that $G/L$ is a Frobenius group with Frobenius kernel $G'/L$ which is an elementary abelian $p$-group of
order $p^m$ and  with a cyclic Frobenius complement of order $p^m-1$.
If $G'$ is not a $p$-group, then it follows by the Frattini argument that $G=N_G(P)G'$ where $P$ is a $p$-Sylow subgroup of $G'$. So $|N_G(P)|=|G/G'|=p^m-1$. This is
a contradiction, since $|P|=p^m$.
Hence $G'$ must be a $p$-group. Since $(|G'|, |G/G'|)=1$, it follows from \cite[Proposition 1]{camina} that
$G$ is a Frobenius group with Frobenius kernel $G'$. Let $|G'|=p^n$. If $p\neq 2$, then  since the order of
Frobenius complement $G$ divides $p^n-1$,
it follows that the Frobenius complement has even order and so
Frobenius kernel $G'$ must be abelian. This is a contradiction. Thus we can assume that  $p=2$.
Since $G'/L$ is an elementary abelian $2$-group, it follows that $|G/L:C_{G/L}(gL)|= |G/L:G'/L|$ for every $g\in C_2$.
This implies that $|C_2|=|G:G'|$. Moreover, for every $g\in C_2$, the order of $gL$ is $2$ and so $g^2\in L$. So $|C_1|=|C_2|=|G:G'|$.
Hence $$|G'|=1+|C_1|+|C_2|=1+2|G/G'|=2^{m+1}-1.$$ This is a contradiction, since $|G'|=2^m$.
\\

Now assume that statement $(ii)$ holds. Let $G'$ be the union of two conjugacy classes $C_0=\{1\}$ and $C_1$. Then
$G'$ is the unique normal minimal subgroup of $G$. So $G'$ is a $p$-group and thus $G$ is solvable. It follows from
\cite[Lemma 12.3]{is} that all nonlinear irreducible characters of $G$ have
equal degree $f$ and either $(a)$ $G$ is a $p$-group, $Z(G)$ is cyclic and $G/Z(G)$ is elementary abelian; or
$(b)$ $G$ is a Frobenius group with an abelian Frobenius complement $H$ of
order $f$ and with Frobenius kernel $G'$ which is an elementary abelian $p$-group.

If $(a)$ holds, then  since $G'\subseteq Z(G)$ and $Z(G)\neq \{1\}$ it follows that $|G'|=2$, $Z(G)$ is the cyclic
group of order $4$ and $G/Z(G)$ is an elementary abelian $2$-group. This is statement $(iv)$.

Suppose that $(b)$ holds and let $|G'|=p^n$.
Since $|G/G'|=f$ it follows that
$$fp^n=|G|=\sum_{\chi\in \Irr(G)}\chi(1)^2=2f^2+f,$$
and so $f=\frac{p^n-1}{2}$. Then $G$ is a Frobenius group of order $\frac{p^n(p^n-1)}{2}$ with  Frobenius kernel $G'$ and
a cyclic Frobenius complement of order  $\frac{p^n-1}{2}$. This is statement $(ii)$.
\\

Conversely, if $G$ is an extra-special $3$-group, then it follows from \cite[Section 7]{hup} that $G$ has two nonlinear irreducible characters.
Moreover, if $G$ is a Frobenius group with a Frobenius complement $H$ and Frobenius kernel $F$, then $G$ has
$\frac{|\Irr(F)|-1}{|H|}$ nonlinear irreducible characters. So if $(ii)$ or $(iii)$ holds, then $G$ has two nonlinear irreducible
characters.  Finally, suppose that $(iv)$ holds. Then there are exactly two irreducible characters $\varphi, \theta\in \Irr(Z(G)) \setminus\Irr(Z(G)/G')$.
Clearly, $\varphi$ and $\theta$ are invariant in $G$ and there are exactly two irreducible characters $\chi, \psi\in \Irr(G)$ such
$[\varphi^G,\chi]\neq 0$ and $[\theta^G, \psi]\neq 0$.  So $\chi$ and $\psi$ are exactly two nonlinear irreducible characters of $G$.

\end{proof}\hfill\bull

The following theorem gives a class of association schemes with exactly two nonlinear irreducible characters.

\begin{theorem}\label{110}
Let $(X,S)$ be an association scheme such that $|\Irr(S)|>3$ and $S'$ is symmetric. Then $(X,S)$
contains exactly two nonlinear irreducible characters if and only if $(X,S)$ is a group-like scheme and
one of the following holds:
\nmrt
\tm{i}$|\widetilde{S'}|=3$ and $\widetilde{S}$ is the wreath product of association schemes $(X,\widetilde{S})_{x\widetilde{S'}}$
and $(X/\!\!/\widetilde{S'},\widetilde{S}/\!\!/\widetilde{S'})$ for $x\in X$,
\tm{ii}$|\widetilde{S'}|=2$ and there exists a strongly normal closed subset $H$ of ${S}$ such that
$|\widetilde{H}|=4$, $|H:S'|=2$ and
$\widetilde{S}$ is the wedge product of $(X,\widetilde{S})_{x\widetilde{H}}$ and $(X/\!\!/\widetilde{S'}, \widetilde{S}/\!\!/\widetilde{S'})$ for some $x\in X$.

\enmrt
\end{theorem}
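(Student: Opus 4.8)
The plan is to reduce the statement to Theorem \ref{20} by passing, for a group-like scheme $(X,S)$, to the attached commutative scheme $(X,\widetilde S)$ whose adjacency algebra is $\C\widetilde S=Z(\C S)$. I will use the following standard features of group-like schemes: each $\chi\in\Irr(S)$ determines the central character $\widetilde\chi\in\Irr(\widetilde S)$ by $\widetilde\chi(\s_{\widetilde s})=\chi(\s_{\widetilde s})/\chi(1)$; the map $\chi\mapsto\widetilde\chi$ is a bijection $\Irr(S)\to\Irr(\widetilde S)$ with $m_{\widetilde\chi}=\chi(1)m_\chi$; and since every linear character of $S$ lies in $\Irr(S/\!\!/S')$ and so has degree $1$, $\chi$ is nonlinear exactly when $\widetilde\chi$ is. Hence $(X,S)$ has exactly two nonlinear irreducible characters if and only if $(X,\widetilde S)$ does. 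Comparing the linear characters of $S$ with those of $\widetilde S$ also shows that $S'$ is a union of $\sim$-classes, that $\widetilde{S'}:=\{\widetilde s:\widetilde s\subseteq S'\}$ is the thin residue $\ooo(\widetilde S)$ with $|\widetilde{S'}|=|\ooo(\widetilde S)|$, and that any closed subset $H$ of $S$ containing $S'$ is a union of $\sim$-classes with $|\widetilde H|=|\overline H|$ and $|H:S'|=|\overline H:\ooo(\widetilde S)|$, where $\overline H=\{\widetilde s:\widetilde s\subseteq H\}$.

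For sufficiency, suppose $(X,S)$ is group-like and (i) or (ii) holds. If (i) holds, then $(X,\widetilde S)$ is a commutative scheme with $|\widetilde S|=|\Irr(S)|>3$ meeting condition (i) of Theorem \ref{20}, so that theorem gives two nonlinear irreducible characters for $(X,\widetilde S)$, hence for $(X,S)$. If (ii) holds, I would verify that $\overline H$ is a strongly normal closed subset of $\widetilde S$ with $|\overline H|=4$, $|\overline H:\ooo(\widetilde S)|=2$ and $\widetilde\varphi(\s_{\widetilde s})=0$ for every nonlinear $\widetilde\varphi\in\Irr(\widetilde S)$ and $\widetilde s\notin\overline H$: $\C\overline H=\C H\cap Z(\C S)$ is a subalgebra of $\C\widetilde S$, and from $\s_{\widetilde g}\s_{\widetilde h}\in\C\overline H$ and the nonnegativity of structure constants one gets $\lambda_{ghk}=0$ whenever $g,h\in H$, $k\notin H$, so $\overline H$ is closed; it is strongly normal since $\ooo(\widetilde S)=\widetilde{S'}\subseteq\overline H$; the indices translate directly; and the vanishing follows from $\chi(\s_s)=\psi(\s_s)=0$ for $s\notin H$. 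Then condition (ii) of Theorem \ref{20} holds for $(X,\widetilde S)$, giving the conclusion as before.

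For necessity, write $\chi,\psi$ for the two nonlinear irreducible characters, put $L=|S:S'|$ so that $|\Irr(S)|=L+2$, and recall $\sum_{\chi'\in\Irr(S)}m_{\chi'}\chi'(\s_s)=0$ for $1\neq s\in S$. As $S/\!\!/S'$ is abelian, $\Irr(S/\!\!/S')$ acts on $\Irr(S)$ via the product of \cite[Theorem 3.4]{hana3} preserving multiplicities, so $\{\chi,\psi\}$ is a union of orbits and I split into two cases. If both orbits are singletons, the usual twisting argument (as in Theorem \ref{20}) makes $\chi,\psi$ vanish off $S'$, so two relations of $S\setminus S'$ are $\sim$-equivalent exactly when they have equal image in $S/\!\!/S'$, accounting for $L-1$ $\sim$-classes outside $S'$; then, using that $S'$ symmetric makes $(X,S)_{xS'}$ commutative, together with Theorem \ref{cli} applied to $\chi_{S'},\psi_{S'}$ and the identity $L+m_\chi\chi(\s_s)/n_s+m_\psi\psi(\s_s)/n_s=0$ for $1\neq s\in S'$ (which confines the pairs $(\chi(\s_s)/n_s,\psi(\s_s)/n_s)$ to a line), I would show that $\chi_{S'}$ and $\psi_{S'}$ are distinct irreducible characters of the commutative subscheme $(X,S)_{xS'}$; hence $\Irr((X,S)_{xS'})=\{1_{S'},\chi_{S'},\psi_{S'}\}$, that subscheme has three relations and---its character table being nonsingular---each relation of $S'$ is its own $\sim$-class, so $|\widetilde S|=3+(L-1)=|\Irr(S)|$, whence $(X,S)$ is group-like by Theorem \ref{glike2} and $|\widetilde{S'}|=3$ together with the wreath decomposition from Theorem \ref{20}(i) gives (i). If instead $\{\chi,\psi\}$ is a single orbit, say $\psi=\chi\zeta_0$ with $\zeta_0$ linear, then $\psi(\s_s)=\chi(\s_s)$ for $s\in S'$, so the identity above yields $\chi(\s_s)/n_s=-L/(m_\chi+m_\psi)$ for all $1\neq s\in S'$; thus $S'\setminus\{1\}$ is a single $\sim$-class and $|\widetilde{S'}|=2$. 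Twisting again localizes the support of $\chi,\psi$ to the preimage $H$ in $S$ of the order-$2$ subgroup of $S/\!\!/S'$ on which the stabilizer of $\chi$ acts trivially, giving $L-2$ $\sim$-classes off $H$; then, mirroring the argument of Theorem \ref{20} and again using that $S'$ is symmetric (so the relevant products of adjacency matrices lie in $\C S'$), I would show $H\setminus S'$ carries exactly two $\sim$-classes, so $|\widetilde H|=4$, $|\widetilde S|=2+2+(L-2)=|\Irr(S)|$, $(X,S)$ is group-like, and (ii) follows with $|H:S'|=2$ and the vanishing of $\chi,\psi$ off $H$.

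The hardest point, in both cases of the necessity argument, will be forcing $(X,S)$ to be group-like, i.e. the inequality $|\widetilde S|\le|\Irr(S)|$ (the reverse inequality being automatic, since the character table of $\C S$ has full row rank, so its number of distinct columns is at least its rank $|\Irr(S)|$). Concretely this amounts to the irreducibility of $\chi_{S'}$ and $\psi_{S'}$ in the first case and to the appearance of exactly two $\sim$-classes over the nontrivial coset inside $H$ in the second; I expect both to emerge only after carefully exploiting the hypothesis that $S'$ is symmetric---which is what makes $(X,S)_{xS'}$ commutative and forces the relevant adjacency-matrix products into $\C S'$---in the spirit of the corresponding counting in the proof of Theorem \ref{20}. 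The remaining work, translating the wreath-product and closed-subset conditions back and forth through $(X,\widetilde S)$, is routine.
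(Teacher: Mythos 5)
Your overall architecture matches the paper's: split according to whether the orbits of $\chi,\psi$ under $\Irr(S/\!\!/S')$ are singletons or form one orbit of length $2$, use the twisting identity $\chi\zeta=\chi$ to force $\chi,\psi$ to vanish off $S'$ (resp.\ off the index-$2$ overgroup $H$ of $S'$), and then transfer everything to $(X,\widetilde S)$ and invoke Theorem \ref{20}. The divergence, and the gap, is at the one step you yourself flag as hardest: proving that $(X,S)$ is group-like. You propose to do this by counting $\sim$-classes and showing $|\widetilde S|=|\Irr(S)|$, which in case one reduces to the claim that $\chi_{S'}$ and $\psi_{S'}$ are distinct irreducible characters of the (commutative) subscheme $(X,S)_{xS'}$, and in case two to the claim that $H\setminus S'$ carries exactly two $\sim$-classes. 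Neither claim is proved; both are left as ``I would show.'' Moreover the first claim is stronger than what the theorem delivers: the conclusion in case (i) is only $|\widetilde{S'}|=3$, i.e.\ $S'$ is a union of three $\sim$-classes, not $|S'|=3$; if $|S'|>3$ then, since $(X,S)_{xS'}$ is commutative and every nonprincipal irreducible character of $S'$ is a constituent of $\chi_{S'}$ or $\psi_{S'}$, at least one of these restrictions is reducible, so your intended route through irreducibility would have to rule this out first --- and nothing in your sketch does. What you actually need is the weaker statement that $S'\setminus\{1\}$ (resp.\ the nontrivial coset of $S'$ in $H$) supports at most two $\sim$-classes, and no argument for that is supplied either.

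The paper avoids this counting problem entirely. It establishes group-likeness via Lemma \ref{glike}: it verifies $\lambda\mu(\s_s\s_h)=\lambda\mu(\s_h\s_s)$ for all $\lambda,\mu\in\Irr(S)$ and all $s,h\in S$, case by case, using (a) the vanishing of $\chi,\psi$ off $S'$ (resp.\ off $H$) to kill the mixed products, and (b) the symmetry and strong normality of $S'$ to get $\s_s\s_{s^*}=\sum_{k\in S'}a_k\s_k=\sum_{k\in S'}a_k\s_{k^*}=\s_{s^*}\s_s$ --- this is precisely where the hypothesis that $S'$ is symmetric enters. In case two it must additionally prove that $H$ is commutative, which it does by a contradiction argument combining Theorem \ref{pindex} (since $|H:S'|=2$ is prime) with Theorem \ref{cli} to show a degree-$>1$ character of $H$ would force $\chi_H=\psi_H$; your sketch omits this step even though your own two-class claim for $H\setminus S'$ would presumably need it. Only after group-likeness is in hand does the paper apply Theorem \ref{20} to $\widetilde S$ to read off the numerical data $|\widetilde{S'}|$ and $|\widetilde H|$. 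So your proposal is a correct frame around an unfilled center: to complete it you should either carry out the $\sim$-class counts rigorously (proving the ``at most two classes'' statements rather than the irreducibility of the restrictions), or replace that step by the direct commutator verification through Lemma \ref{glike} as the paper does.
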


\begin{proof}
First suppose  $(X,S)$ contains exactly two nonlinear irreducible characters  $\chi$ and $\psi$.
Since $\Irr(S/\!\!/S')$ acts on $\Irr(S)$ and
$\chi$ and $\psi$ are only nonlinear irreducible characters of $S$, it follows that either the orbits of $\chi$ and $\psi$ have
length $1$ or $\chi$ and $\psi$ lie in the same orbit.

First we assume that the orbits of $\chi$ and $\psi$ have length $1$. This implies that
$\chi_{S'}\neq \psi_{S'}$ and for every  $\zeta\in \Irr(S/\!\!/S')$,
$\chi\zeta=\chi$ and $\psi\zeta=\psi$. Since for every $s\in S\setminus S'$, there exists $\zeta\in \Irr(S/\!\!/S')$
such that $\zeta(\s_{s^{S'}})\neq 1$, from equalities  $\chi(\s_s)=\chi(\s_s)\zeta(\s_{s^{S'}})$ and
$\psi(\s_s)=\psi(\s_s)\zeta(\s_{s^{S'}})$ we have $\chi(\s_s)=\psi(\s_s)=0$. Now we prove that
$(X,S)$ is a group-like scheme. To do this, we show that for every $\lambda, \mu\in \Irr(S)$ and every $s,h\in S$, $\lambda\mu(\s_s\s_h)=\lambda\mu(\s_h\s_s)$
Then from Lemma \ref{glike}, $(X,S)$ is a group-like scheme. Let  $\lambda, \mu\in \Irr(S)$. Clearly, if
$\lambda$ or $\mu$ belongs to $\Irr(S/\!\!/S')$, then $\lambda\mu\in \Irr(S)$ and the result follows. So we can assume that
$\lambda, \mu\in \Irr(S)\setminus \Irr(S/\!\!/S')$. Then $\lambda, \mu\in \{\chi, \psi\}$. We show that $\lambda\mu(\s_s\s_h)=\lambda\mu(\s_h\s_s)$
for every $s,h\in S$. Clearly, if $s,h \in S'$, then $\s_s\s_h=\s_h\s_s$ and so $\lambda\mu(\s_s\s_h)=\lambda\mu(\s_h\s_s)$.
Moreover, if  either $s\in S'$ and $h\in S\setminus S'$ or $s,h\in S\setminus S'$ and $h\neq s^*$, then $$\s_s\s_h\displaystyle\sum_{k\in S\setminus S'}a_k\s_k,$$ and so
$$\lambda\mu(\s_s\s_h)=\displaystyle\sum_{k\in S \setminus S'}a_k\lambda\mu(\s_k)=\displaystyle\sum_{k\in S\setminus S'}a_k\frac{\lambda(\s_k)\mu(\s_h)}{n_k}=0$$
indeed, $\lambda(\s_s)=\mu(\s_s)=0$, for every $s\in S\setminus S'$. Similarly, $\lambda\mu(\s_h\s_s)=0$. So
$\lambda\mu(\s_h\s_s)=0=\lambda\mu(\s_s\s_h)$.
 Finally, we can assume that
$s,h\in S\setminus S'$ and $h=s^*$. Since $S'$ is symmetric it follows that
$\lambda_{ss^*k}=\lambda_{s^*sk^*}=\lambda_{s^*sk}$ and then
$$\s_s\s_{s^*}=\displaystyle\sum_{k\in S'}\lambda_{ss^*k}\s_k=\displaystyle\sum_{k\in S'}\lambda_{s^*sk}\s_{k}=\s_{s^*}\s_s.$$
So $\lambda\mu(\s_s\s_{s^*})=\lambda\mu(\s_{s^*}\s_s)$. Hence $(X,S)$ is a group-like scheme.

Now consider the association scheme $(X,\widetilde{S})$. Since $m_{\widetilde{\chi}}=\chi(1)m_\chi$ and $m_{\widetilde{\psi}}=\psi(1)m_\psi$
it follows that $\widetilde{\chi}$ and $\widetilde{\psi}$ are only nonlinear irreducible characters of $\widetilde{S}$; see \cite{hanaki}. Since
$\widetilde{\chi}(\s_{\widetilde{s}})\neq \widetilde{\psi}(\s_{\widetilde{s}})$, for every $\widetilde{s}\in \widetilde{S}'$,
part $(i)$ of Theorem \ref{20} shows that
$|\ooo(\widetilde{S})|=3$ and $\widetilde{S}$ is the wreath product of association schemes $(X,\widetilde{S})_{x\ooo(\widetilde{S})}$ and  $(X/\!\!/\ooo(\widetilde{S}),\widetilde{S}/\!\!/\ooo(\widetilde{S}))$ for $x\in X$. Since $\ooo(\widetilde{S})=\widetilde{S'}$,
we have the statement $(i)$.
\\

Now we assume that $\chi$ and $\psi$ lie in the same orbit. Then $\chi_{S'}=\psi_{S'}$ and
$\{\chi, \psi\}$ forms an orbit of length $2$. So $2$ divides $ |\Irr(S/\!\!/S')|$ and $S/\!\!/S'$ has a subgroup of order $2$.
Then there exists a strongly
normal closed subset $H$ of $S$ such that $S'\subseteq H$ and $|H:S'|=2$. Clearly, $\chi_H\neq \psi_H$. We show that $H$
is commutative. To do this, we prove that every irreducible character of $H$ has degree $1$. Suppose on contrary; that
there exists $\varphi\in \Irr(H)$ such that $\varphi(1)>1$.  Since $S'$ is commutative, it follows from Theorem \ref{pindex} that
$\varphi_{S'}$ is a sum of at most two distinct irreducible characters of $S'$ and $\varphi=\lambda^H$ for some irreducible
constituent $\lambda$ of $\varphi_{S'}$. On the other hand, since $\chi_{S'}=\psi_{S'}$, we conclude that $\lambda^S=a\chi+b\psi$
for some integers $a$ and $b$. Then $$a\chi+b\psi=\lambda^S=(\lambda^H)^S=\varphi^S.$$
This implies that $(\chi_H, \varphi)>0$ and $(\psi_H, \varphi)>0$. Then it follows from Theorem \ref{cli} that
$$\chi_{H}=\psi_{H}=e\displaystyle\sum^n_{i=1}\varphi_i,$$
where $\varphi_i, 1\leq i\leq n$, are $S/\!\!/{H}$-conjugate irreducible characters of $\varphi$. This is
a contradiction, indeed $\chi_H\neq \psi_H$. Hence $H$ is commutative.

Now by a similar way as above, one can see that for every $\lambda, \mu\in \Irr(S)$,
$\lambda\mu(\s_s\s_h)=\lambda\mu(\s_h\s_s)$ for every $s,h \in S$. Note that
$S'$ is symmetric, $H$ is commutative and $\chi(\s_s)=\psi(\s_s)=0$ for every $s\in S\setminus H$.
So it follows from Lemma \ref{glike} that $(X,S)$ is a group-like scheme.
Moreover, since  association scheme $(X,\widetilde{S})$ has exactly two nonlinear irreducible characters, it follows from
statement $(ii)$ of Theorem \ref{20} that $|\ooo(\widetilde{S})|=2$, $|\widetilde{H}|=4$ and
$\widetilde{S}$ is the wedge product of $(X,\widetilde{S})_{x\widetilde{H}}$ and
$(X/\!\!/\ooo(\widetilde{S}), \widetilde{S}/\!\!/\ooo(\widetilde{S}))$ for some $x\in X$.
This is statement $(ii)$. 

Conversely,  it follows from Theorem \ref{20}  that
$(X,\widetilde{S})$ has two nonlinear irreducible characters. Since $m_{\widetilde{\chi}}=\chi(1) m_\chi$, for every
$\chi\in \Irr(S)$ we conclude that $(X,S)$ has only two nonlinear irreducible characters. The proof is now complete.

\end{proof}\hfill\bull

\begin{remark}
The conditions $|\Irr(S)|>3$ and symmetry of $S'$ in
Theorem \ref{110} are  necessary conditions; see example below.
\end{remark}

\begin{example}
Let $(X,S)$ be the association scheme of order $21$, No. $19$ in \cite{hanakitable}, where
$S=\{s_0, \cdots, s_6\}$.
Then from \cite{hanakitable} the character table of the adjacency
algebra of $S$ is as follows.

$$\begin{array}{c|cccccc|c}
&\s_{s_0}&\s_{s_1}&\s_{s_2}&\s_{s_3}& \s_{s_4} &\s_{s_5} &m_\chi\\
\hline  \chi_1 &1&              2&                2&             4&4& 8& 1\\
 \chi_2 &1& -1&  -1& 1& 1&-1& 8\\
 \chi_3 &2& 1& 1& -2& -2& 0& 6\\
 \end{array}
$$
One can see that $(X,S)$ has two nonlinear irreducible characters, but it is not a group-like scheme
and the assertion of Theorem \ref{110} does not hold for
$(X,S)$. Moreover, if we consider the association scheme $H=S\wr K$,  where $K$ is
the trivial scheme of order $2$, then $H$ has order
$42$ and  the character table of $H$ is as follows; see \cite{irrwr}.
$$\begin{array}{c|ccccccc|c}
&\s_{s_0}&\s_{s_1}&\s_{s_2}&\s_{s_3}& \s_{s_4} &\s_{s_5}&\s_{s_6} &m_\chi\\
\hline  \chi_1 &1&              2&                2&             4&4& 8& 21& 1\\
\chi_2 &1&              2&                2&             4&4& 8& -21& 1\\
 \chi_2 &1& -1&  -1& 1& 1&-1& 0&16\\
 \chi_3 &2& 1& 1& -2& -2& 0& 0& 12\\

 \end{array}
$$
It is easy to see that  $H$ is not a group-like scheme. Although, $H$ contains exactly two nonlinear irreducible characters, but
$H'$ is not symmetric and so the conclusion of Theorem \ref{110} does not hold for $H$.

\end{example}

\end{document}